\newtheorem{theorem}{Theorem}[section]
\newtheorem{corollary}[theorem]{Corollary}
\newtheorem{proposition}[theorem]{Proposition}
\newtheorem{assumption}[theorem]{Assumption}
\theoremstyle{definition}
\newtheorem{definition}[theorem]{Definition}
\newtheorem{remark}[theorem]{Remark}
\newtheorem{problem}{Problem}
\newtheorem{example}[theorem]{Example}
\theoremstyle{remark}
\renewcommand{\theclaim}{\textup{\theclaim}}
\newtheorem*{acknowledgements}{Acknowledgements}
\numberwithin{equation}{section}
\def\openone%{\hbox{\upshape \small1\kern-3.3pt\normalsize1}}
\newbox\ipbox
\newcommand{\ip}[2]{\left\langle #1\, , \,#2\right\rangle}
\newcommand{\diracb}[1]{\left\langle #1\mathrel{\mathchoice

{\setbox\ipbox=\hbox{$\displaystyle \left\langle\mathstrut
#1\right.$}

\vrule height\ht\ipbox width0.25pt depth\dp\ipbox}

{\setbox\ipbox=\hbox{$\textstyle \left\langle\mathstrut
#1\right.$}

\vrule height\ht\ipbox width0.25pt depth\dp\ipbox}

{\setbox\ipbox=\hbox{$\scriptstyle \left\langle\mathstrut
#1\right.$}

\vrule height\ht\ipbox width0.25pt depth\dp\ipbox}

{\setbox\ipbox=\hbox{$\scriptscriptstyle \left\langle\mathstrut
#1\right.$}

\vrule height\ht\ipbox width0.25pt depth\dp\ipbox}

}\right. }
\newcommand{\dirack}[1]{\left. \mathrel{\mathchoice

{\setbox\ipbox=\hbox{$\displaystyle \left.\mathstrut
#1\right\rangle$}

\vrule height\ht\ipbox width0.25pt depth\dp\ipbox}

{\setbox\ipbox=\hbox{$\textstyle \left.\mathstrut
#1\right\rangle$}

\vrule height\ht\ipbox width0.25pt depth\dp\ipbox}

{\setbox\ipbox=\hbox{$\scriptstyle \left.\mathstrut
#1\right\rangle$}

\vrule height\ht\ipbox width0.25pt depth\dp\ipbox}

{\setbox\ipbox=\hbox{$\scriptscriptstyle \left.\mathstrut
#1\right\rangle$}

\vrule height\ht\ipbox width0.25pt depth\dp\ipbox}

} #1\right\rangle}
\newcommand{\bz}{\mathbb{Z}}
\newcommand{\br}{\mathbb{R}}
\newcommand{\bc}{\mathbb{C}}
\newcommand{\bt}{\mathbb{T}}
\newcommand{\bn}{\mathbb{N}}
\def\blfootnote{\xdef\@thefnmark{}\@footnotetext}
\renewcommand{\mod}{\operatorname{mod}}
\def\-{^{-1}}
\begin{document}
\title[Fourier series on fractals]{Fourier series on fractals: a parallel with wavelet theory}
\author{Dorin Ervin Dutkay}
\blfootnote{Research supported in part by a grant from the National Science Foundation DMS-0704191}
\address{[Dorin Ervin Dutkay] University of Central Florida\\
	Department of Mathematics\\
	4000 Central Florida Blvd.\\
	P.O. Box 161364\\
	Orlando, FL 32816-1364\\
U.S.A.\\} \email{ddutkay@mail.ucf.edu}

\author{Palle E.T. Jorgensen}
\address{[Palle E.T. Jorgensen]University of Iowa\\
Department of Mathematics\\
14 MacLean Hall\\
Iowa City, IA 52242-1419\\}\email{jorgen@math.uiowa.edu}
\thanks{} 
\subjclass[2000]{46C07, 46B99, 22D10, 13D40, 28A80, 42C40, 51F20, 37F50, 47A57, 65T60.}
\keywords{Fourier series, wavelet, Hilbert space, isometry, representations of groups, fractals, orthogonality, Julia sets, spectrum, iterated function system, frame.}
\dedicatory
{Dedicated to the memory of Thomas P. Branson.}

\begin{abstract}
 We study orthogonality relations for Fourier frequencies and complex
exponentials in Hilbert spaces $L^2(\mu)$ with measures $\mu$ arising from
iterated function systems (IFS). This includes equilibrium measures in
complex dynamics. Motivated by applications, we draw parallels between
analysis of fractal measures on the one hand, and the geometry of wavelets
on the other.
   We are motivated by spectral theory for commuting partial differential
operators and related duality notions. While stated initially for bounded
and open regions in $\br^d$, they have since found reformulations in the theory
of fractals and wavelets. We include a historical sketch with questions from
early operator theory.
\end{abstract}
\maketitle \tableofcontents
\section{Introduction}\label{intro}

In this paper we give an overview (supplemented with new results and open problems) of recent research on orthogonality relations for Fourier frequencies, or rather the study of complex exponentials, see details below. This will be used in drawing up parallels between analysis of fractal measures on the one hand, and the geometry of wavelets on the other.

      This allows us to further include new results here involving non-linear complex systems.

       Much of our motivation comes from a problem dating back to the use of von Neumann's Spectral Theorem in the study of commuting partial differential operators. This led to a formulation of a number of duality notions. While they started for bounded and open regions in $\br^d$, many of the questions have found reformulations in the theory of fractals and wavelets. At the end of our paper, we include a historical section making connections to questions from the beginning of operator theory.

It is the purpose of our paper to draw attention to these interconnections. For the benefit of the reader, we composed in the Appendix a personal and historical section dealing with spectrum-tile duality the way it was first formulated by I.E. Segal and B. Fuglede.

     Specifically, we study $L^2(\mu)$-Hilbert spaces with an emphasis on measures $\mu$  arising as equilibrium measures for iterated function systems (IFSs) in $\br^d$, i.e., for finite contractive function systems in $\br^d$, so in $d$ real dimensions.

        One of the applications of these IFSs, and their spectral theory, is to image processing, see e.g., \cite{Bar06} and \cite{DuJo06a}. The viewpoint in \cite{Bar06} is that a variety of images, e.g., large bit-size satellite photographs, may be condensed into IFS-codes. The trick is that such large-scale images can be shown to allow effective IFS-representations; and that the IFSs are encoded with a very small set of parameters. This of course is a feature that IFSs share with wavelets, see also \cite{DuJo06a}.
        
        The paper is structured as follows: in Section \ref{mraa} we present the parallel between multiresolution theory and the theory of orthonormal Fourier series on fractals.  In Section \ref{open} we formulate several open problems related mainly to the fractal context. In the Appendix we describe the history and motivation of the subject. 
        At several points in the paper we will make connections to iteration of complex rational maps.

\subsection{Complex systems}\label{comp}     

 While affine iterated function systems (IFSs) are traditionally defined for a fixed and finite set of transformations $S = (\tau_i)$ in $\br^d$, it is often helpful to realize them by embeddings into compact quotients. Specifically, starting with a particular system $S$, to get a fractal iteration limit $X$ (see Definitions \ref{defaff}, and \ref{definvmea}), one usually imposes suitable contractivity properties on the maps $\tau_i$ making up a particular system $S$. If the mappings $\tau_i$ are affine it is occasionally possible to embed an associated fractal $X$ in a compact torus $\bt^d = \br^d/\Gamma$ where $\Gamma$ is a suitably chosen lattice in $\br^d$, e.g., $\Gamma= \bz^d$.

       This makes an intriguing connection to complex dynamics \cite{Bea91, Bea92} and Hilbert spaces of analytic functions which we address below, and again in section 3. But the connection to compact subsets in the complex plane opens the door to the use of Hilbert space bases in the study of Julia sets derived from the iteration of polynomial mappings $z\mapsto R(z)$ in the complex plane, see \cite{Bea91}.

   A main focus here will be on affine fractal measures $\mu$, but also including measures $\mu$ which are supported on Julia sets for certain rational mappings of one complex variable.

         In the  case of measures $\mu$ supported on compact subsets $X$ in the complex plane $\bc$, orthogonality relations for Fourier frequencies $z^n$ may be studied in terms of the moments, i.e., $M_n(\mu) :=\int z^n\, d\mu(z)$   for $n = 0, 1,\dots$.

      The following example illustrates both orthogonality and linear relations within the set of monomials $z^n$ in $L^2(\mu)$. Our discussion includes measures $\mu$; both when $L^2(\mu)$ is finite and infinite-dimensional. The infinite-dimensional cases often arise as IFS-limits.

      Indeed, this yields attractive classes of measures $\mu$ with compact support in the complex plane $\bc$ such that $L^2(\mu)$ is infinite-dimensional, and such that there are non-trivial linear relations governing the monomials $\{z^n\, |\, n \in \bn_0\}$ restricted to the support of $\mu$: Specifically, when each $z^n$ is viewed as a vector in the Hilbert space $L^2(\mu)$.

       This may be seen by a choice of certain fractal measures $\mu$ having compact support in $\bc$, and chosen in such a manner that a family of Gram-Schmidt polynomials $p_k(z)$, $k = 0, 1, 2,\dots$ are in fact monomials $z^n$, but with $n$ restricted to an infinite lacunary progression in $\bn_0$; a rather sparse subset of $\bn_0$.

       Using a construction of Jorgensen-Pedersen  \cite{JoPe98} it is possible to get the lacunary progression $z^n$ making up an ONB in $L^2(\mu)$ for values of $n$ arranged to be sparsely distributed in $\bn_0$ in such a manner that gaps-sizes are powers of $4$.

    We refer to \cite{JoPe98} for a complete discussion of this class of examples. But for the benefit of readers, a summary is included here. The details of the construction further helps make the connection between the setting of \cite{JoPe98} and the present somewhat different set of questions.

     Details: Arrange a $2\pi$-rescaling of the circumference of the circle $\bt$ in $\bc$, so it scales to one. Now construct an IFS measure $\mu$ with a certain compact and fractal support $X$ contained in $\bt$ as follows: Subdivide the circumference into $4$ equal parts, and define the two mappings $\{\tau_0, \tau_1\}$ that select every second of the $4$ subintervals. In a linear scale, we may take $\tau_0(x) = x/4$, and $\tau_1(x) = (x + 2)/4$. Pick probabilities $(1/2, 1/2)$. Then by Hutchinson's theorem \cite{Hut81}, this system has a unique equilibrium measure $\mu$, and the theorem in \cite{JoPe98} states that the complex monomials $\{z^n\, |\, n = 0, 1, 4, 5, 16, 17, 20, 21, \dots\}$ form an ONB in $L^2(\mu)$.
     
    The following curious conclusion follows:  
   \begin{corollary}\label{cor1_1} {\it Every} function $f$ in $L^2(\mu)$, for $\mu$ the scale-4 measure, is the restriction to the scale-4 Cantor set $X = \mbox{supp}(\mu)$ of a Hardy $H^2$-function, say $F$.
   \end{corollary}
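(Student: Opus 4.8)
My plan is to observe that, once we translate between the Hardy space $H^2(\mathbb{D})$ and square-summable Taylor series, the corollary is essentially a restatement of the Jorgensen--Pedersen orthonormal basis recalled above. Write $\Lambda=\{0,1,4,5,16,17,20,21,\dots\}$ for the set of nonnegative integers whose base-$4$ expansion uses only the digits $0$ and $1$. By the theorem of \cite{JoPe98} quoted in the construction above, the monomials $e_\lambda(z):=z^\lambda$, $\lambda\in\Lambda$, form an orthonormal basis of $L^2(\mu)$, with $\mu$ the scale-$4$ IFS measure supported on the Cantor set $X\subset\bt$. So I would first expand an arbitrary $f\in L^2(\mu)$ by Parseval: $f=\sum_{\lambda\in\Lambda}c_\lambda e_\lambda$ in $L^2(\mu)$, with $\sum_{\lambda\in\Lambda}|c_\lambda|^2=\|f\|_{L^2(\mu)}^2<\infty$.

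Next I would set $F(z):=\sum_{\lambda\in\Lambda}c_\lambda z^\lambda$ for $z\in\mathbb{D}$. Its Taylor coefficients (set to $0$ off $\Lambda$) lie in $\ell^2(\bn_0)$, so $F\in H^2(\mathbb{D})$ and $\|F\|_{H^2}^2=\sup_{0<r<1}\int_0^{2\pi}|F(re^{i\theta})|^2\,\frac{d\theta}{2\pi}=\sum_{\lambda\in\Lambda}|c_\lambda|^2=\|f\|_{L^2(\mu)}^2$; thus $f\mapsto F$ is a linear isometry of $L^2(\mu)$ into $H^2$. It then remains to verify that $F$ genuinely ``restricts'' to $f$ on $X$, and this is the step I expect to carry the real content.

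The difficulty is that $\mu$ is singular with respect to arc length on $\bt$, so the $d\theta$-a.e.\ boundary function of a generic $H^2$ element need not even be defined $\mu$-a.e.\ on $X$. I would handle this with the dilations $F_r(z):=F(rz)$, $0<r<1$. By Cauchy--Schwarz $\sum_{\lambda\in\Lambda}|c_\lambda|r^\lambda\le\bigl(\sum_\lambda|c_\lambda|^2\bigr)^{1/2}\bigl(\sum_\lambda r^{2\lambda}\bigr)^{1/2}<\infty$, so the series for $F_r$ converges uniformly on $\overline{\mathbb{D}}\supset X$, and since $\mu$ is a probability measure we get $F_r|_X=\sum_{\lambda}c_\lambda r^\lambda e_\lambda$ with convergence in $L^2(\mu)$; because $\sum_\lambda|c_\lambda|^2|1-r^\lambda|^2\to0$ as $r\uparrow1$, it follows that $F_r|_X\to f$ in $L^2(\mu)$. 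The analogous estimate $\|F-F_r\|_{H^2}^2=\sum_\lambda|c_\lambda|^2|1-r^\lambda|^2\to0$ shows $F_r\to F$ in $H^2$, so the two notions of boundary value are compatible. Finally, choosing $r_k\uparrow1$ along which $F_{r_k}\to f$ $\mu$-a.e.\ on $X$, I would conclude that $F$ has radial boundary values $\mu$-a.e.\ on $X$ equal to $f$ --- i.e., $f$ is the restriction to $X$ of the Hardy function $F$ --- which is the assertion of the corollary.
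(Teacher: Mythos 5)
Your construction is essentially the paper's own: expand $f$ in the Jorgensen--Pedersen ONB $\{z^\lambda\}_{\lambda\in\Lambda}$, set $F(z)=\sum_{\lambda\in\Lambda}c_\lambda z^\lambda$, and observe that square-summability of the coefficients places $F$ in $H^2$ with $\|F\|_{H^2}=\|f\|_{L^2(\mu)}$; the paper adds only the pointwise bound $|F(z)|\le\|f\|_{L^2(\mu)}(1-|z|^2)^{-1/2}$ on the open disk and then reads ``restriction'' precisely as this boundary representation, before passing to the splitting $F(z)=F_0(z^4)+zF_1(z^4)$. Your analysis of the dilates $F_r(z)=F(rz)$ is a genuine sharpening of a step the paper leaves implicit: the uniform convergence of the series for $F_r$ on the closed disk and the estimate $\sum_\lambda|c_\lambda|^2\,|1-r^\lambda|^2\to0$ do show that $F_r|_X\to f$ in $L^2(\mu)$, and that is a legitimate precise meaning of ``$f$ is the restriction of $F$ to $X$'' in this singular setting.

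The one flaw is your final sentence. From $F_r|_X\to f$ in $L^2(\mu)$ you may extract $r_k\uparrow1$ with $F_{r_k}\to f$ $\mu$-a.e., but this does not show that the radial limit $\lim_{r\uparrow1}F(rz)$ exists for $\mu$-a.e.\ $z\in X$, let alone equals $f(z)$: subsequential a.e.\ convergence says nothing about the behaviour of $F(rz)$ between the $r_k$, and since $\mu$ is singular with respect to arc length, Fatou's theorem (radial limits Lebesgue-a.e.) gives no information on $X$. Establishing honest radial or nontangential limits $\mu$-a.e.\ would need an extra ingredient, such as a maximal-function estimate relative to $\mu$, and is more than the corollary asserts. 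So either stop at the $L^2(\mu)$-convergence of the dilates --- which is the sense in which the paper states the boundary representation --- or claim the a.e.\ statement only along the subsequence $r_k$.
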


Specifically, we identify $F$ with an analytic function on $\{z\,|\, |z| < 1\}$, and with its a.e. non-tangential limit to the boundary circle $\bt$ (the one-torus). And we further identify $X$ with a subset of $\bt$ as described above.

        \begin{remark} The corresponding result is clearly not true for $L^2(\bt, \mbox{Haar})$, i.e.,  if the Cantor measure $\mu$ on $\bt$ is replaced with the Haar measure; or equivalently with the restricted Lebesgue measure to a period interval.
\end{remark}

      In contrast, it was proved in \cite{JoPe98} that the analogous construction with the middle-third Cantor set yields no $L^2(\mu)$-orthogonal subsets of $\{z^n\, |\, n \in \bn_0\}$ of cardinality more than two; in fact there cannot be more than two orthogonal Fourier frequencies, even allowing real (non-integer) frequencies.

     In the scale-$4$ fractal case, it can be shown \cite{JoPe98} that the Hausdorff dimension of $\mu$ and its support $X$ is $1/2$. Hence, as a subset of $\bt$, $X$ is ``thin'', but still with the cardinality of the continuum.

     Orthogonality: The sparse distribution in $\bn_0$ of the exponents in the ONB for $L^2(\mu)$ for the scale-$4$ case is as follows: The values of $n$ which turn the monomials $z^n$ into an ONB in $L^2(\mu)$ are represented by the following set $\Lambda$ of all finite sums $\lambda=j_0 + j_1 4 + j_2 4^2 + j_3 4^3 + \dots$  where each $j_k$ is a $0-1$ bit, i.e., each $j_k$ is chosen in $\{0, 1\}$.

     For related constructions, see also \cite{DuJo06a, DuJo07a, DuJo07b}.

      Corollary \ref{cor1_1} is a definite statement about the specific polynomial $z \mapsto z^4$ and its iterations. It relates to functions $F(z)$ that are analytic in the interior of the filled Julia set; in this case the closed disk  $\{ z \in \bc \,|\, |z| \leq 1\}$.

\begin{definition}
Let $R = R(z)$ be a polynomial. We will view $R$ as an endomorphism in the complex plane, or in the one-point compactification, the Riemann sphere. The $n$-fold iteration of $z\mapsto R(z)$ will be denoted $R^n$, i.e., inductively,  $R^{n + 1}(z) := R(R^n(z))$, for $n = 1, 2,\dots$. The filled Julia set $Y(R)$ of $R$ consists of the points with bounded orbits, i.e.,  $Y(R) := \{ z \,|\, R^n(z)\mbox{ is bounded }\}$. The Fatou set $F(R)$ is the largest open set in which the sequence $(R^n)$ is normal, and the Julia set $J(R)$ is the complement of $F(R)$.
     It is known \cite{Bea91} that $J(R)$ is the boundary of the filled Julia set $Y(R)$, when $Y(R)$ is not empty.
     \end{definition}
      
      It would be interesting to prove an analogous result for $z\mapsto z^2 + c$, or for $z\mapsto R(z)$ where $R$ is some polynomial. Then the question below will be about functions $F(z)$ which are analytic in the interior of the filled Julia set for the particular polynomial $z\mapsto R(z)$.

Recall, in general, the boundary of the filled Julia set for $R$ is the Julia set $J(= J(R))$ coming from the dynamics of $R$; and moreover that the IFS-Brolin measure $\mu_R$ is supported on $J(R)$.

Let $z\mapsto R(z)$ be given, and let's introduce the Brolin measure \cite{Bro65} $\mu_R$ on $J(R)$. Let $H^2(J(R))$ be the functions in
$L^2(J(R), \mu_R)$ which have analytic continuation to the interior of the filled Julia set of $R$.

\begin{definition}\label{defk2}
      Let $z\mapsto R(z)$ be a given polynomial. Let $P$ be a finite set of distinct polynomials each of degree less than the degree of $R$. Let $K^2(R, P)$ be a space of functions $F$ in $H^2(J(R))$. We say that $F$ is in $K^2(R, P)$  iff there are functions $F_p(z)$ in $H^2(J(R))$ indexed by $P$ such that
 \begin{equation}\label{eqdz}
F(z) = \sum_{p \in P} p(z) F_p(R(z)).\end{equation} It is easy to see that each $K^2(R, P)$ is a closed subspace in $H^2(J(R))$, so in particular it is a Hilbert space.
\end{definition}

\begin{remark}
The finite family $P$ that enters \eqref{eqdz} is a family of generalized filters. It depends on the particular polynomial $z\mapsto R(z)$, and we can expect to find solutions to \eqref{eqdz} in Definition \ref{defk2} from the kind of representations of the Cuntz algebras the authors studied in their work on wavelets on fractals, \cite{DuJo06a}.
\end{remark}

       A refinement of the conclusion in Corollary \ref{cor1_1} is this: For $R(z) := z^4$, set $P := \{1, z\}$. Hence the functions $F$ in $K^2(R, P)$ have the representation $F(z) = F_0(z^4) + z F_1(z^4)$ for a pair of functions $F_0$ and $F_1$ in the Hardy space $H^2$. Our Corollary \ref{cor1_1} above then states that there is an isometric isomorphism of $L^2(J(R), \mu_R)$ with the Hilbert space $K^2(R, P)$.

 To illustrate Corollary \ref{cor1_1} and \eqref{eqdz}, we supply the proof details in the case of $R(z):=z^4$, and $P=\{1,z\}$.
 
 Since the spectrum for $\mu$ is the set $\Lambda=\{\sum_{k=0}^nj_k4^k\,|,j_k\in\{0,1\}\}\subset\bn_0$, the family $\{e_\lambda\,|\,\lambda\in\Lambda\}$ is an ONB in $L^2(\mu)=L^2(X,\mu)$ with $X\subset\bt$, and we get the following $L^2(\mu)$-orthogonal expansion $f\mapsto F_f$
 \begin{equation}\label{eqdzdz}
 F_f(z)=\sum_{\lambda\in\Lambda}c_\lambda z^\lambda
 \end{equation}
  valid for all $f\in L^2(\mu)$. By this we mean that every $f\in L^2(\mu)$ is the boundary representation of the function $F_f(z)$ in \eqref{eqdzdz}, analytic in the disk $D:=\{z\in\bc\,|\, |z|<1\}$. 
  
  Specifically, \eqref{eqdzdz} is absolutely convergent in $D$; and applying Schwarz to \eqref{eqdzdz} we get
  $$|F_f(z)|\leq\|f\|_{L^2(\mu)}\frac{1}{\sqrt{1-|z|^2}},\quad(z\in D).$$
  
  To get the desired representation
  \begin{equation}\label{eqdz3}
  F_f(z)=F_0(z^4)+zF_1(z^4)
  \end{equation}
  from Definition \ref{defk2}, note first that $\Lambda=4\Lambda\cup(1+4\Lambda)$.
  
  Substituting this into \eqref{eqdzdz} then yields
  $$F_f(z)=\sum_{\lambda_1\in\Lambda}c_{4\lambda_1}(z^4)^{\lambda_1}+z\sum_{\lambda_2\in\Lambda}c_{4\lambda_2+1}(z^4)^{\lambda_2},$$
  which is the desired formula \eqref{eqdz3}.
  
  Since the functions $e_\lambda(x)=z^\lambda$ form an ONB in $L^2(\mu)$, the representation in \eqref{eqdz3} is an orthogonal expansion, i.e., we have
  $$\|f\|_{L^2(\mu)}^2=\|F_f\|_{H^2}^2=\|F_0\|_{H^2}^2+\|F_1\|_{H^2}^2.$$

\section{Multiresolution wavelets and Iterated Function Systems}\label{mraa}

 The purpose of this section is to present some key results regarding geometry and spectral theory for wavelets and for the fractal measures which arise from iterated function systems (IFSs). Our focus is on those wavelets which come from multiresolutions (MRAs), see subsection \ref{scal} below. This includes wavelets on fractals, as presented in the paper \cite{DuJo06a} by the co-authors.
      
      As mentioned in the Introduction, the IFSs include dynamical systems defined from a finite set of affine and contractive mappings in $\br^d$, or from the branches of inverses of complex polynomials, or of rational mappings in the complex plane.

       In terms of signal processing, what the two have in common, wavelets and IFSs, is that large scale data may be compressed into a few functions or parameters. In the case of IFSs, only a few matrix entries are needed, and a finite set of vectors in $\br^d$ must be prescribed. As is shown in \cite{Bar06}, this can be turned into effective codes for large images. Similarly (see \cite{Jor06}) discrete wavelet algorithms can be applied to digital images and to data mining. The efficiency in these application lies in the same fact: The wavelets may be represented and determined by a small set of parameters; a choice of scaling matrix and of masking coefficients, i.e., the coefficients $(a_k)$ in the scaling identity \eqref{eqscaling} below.

      The main characters in the chapter are attractors (section \ref{att}) understood in the general sense so they include both wavelets and IFSs. In section \ref{mra}, we introduce multiresolution analyses (MRAs) in the form of scales of nested subspaces in a fixed Hilbert space, which may be $L^2(\br^d)$, or some other Hilbert space derived from a computational setup, for example for iteration of rational functions in the complex plane (subsection \ref{comp}).

      Our scaling identity is discussed in section \ref{scal}, but again in a setting which includes both the traditional wavelets, and their fractal variants. In subsections \ref{ortho} - \ref{lawton}, we study orthogonality relations from the view point of a transfer operator; an infinite-dimensional operator analogous to the Perron-Frobenius operator from matrix analysis .

      It turns out that there is a Perron-Frobenius operator in wavelet and fractal theory which encodes orthogonality relations. We call it ``the transfer operator'', or ``the Perron-Frobenius-Ruelle operator'', but other names may reasonably be associated with it, see e.g., \cite{BrJo02}. In the context of wavelets, the operator was studied in \cite{Law91} . It will be specified by a chosen filter function $m_0$, and we denote the associated operator  by $R_{m_0}$.

       Sections \ref{cohen} and \ref{super} are about applications of the transfer operator. The term ``super-wavelet'' in section \ref{super} refers to a super-structure. It is proved that non-orthogonal wavelets, satisfying a degree of stability (Parseval frames), can be turned into orthogonal wavelets in an enlarged Hilbert space. This is done in section \ref{super}. Motivations are from signal/image processing. By removing correlations between frequency bands, or between nearby pixels, redundancies are removed, and reduction in data size, and data storage, can thereby be achieved.

\begin{definition}\label{defspectral}
For $\lambda\in\br^d$, let $e_\lambda(x)=e^{2\pi i\lambda\cdot x}$, $x\in\br^d$.
A measure $\mu$ on $\br^d$ is said to be a {\it spectral measure} if there exists a set $\Lambda\subset\br^d$ such that the family $\{e_\lambda\,|\,\lambda\in\Lambda\}$ forms an orthonormal basis for $L^2(\mu)$. In this case, the set $\Lambda$ is called a {\it spectrum} of the measure $\mu$.
\end{definition}

We are interested in the question: what spectral measures can be constructed, other than the Lebesgue measure on an interval? A surprising answer will be offered by affine iterated function systems.

\begin{definition}\label{defaff}
Let $A$ be a $d\times d$ expansive integer matrix. We say that a matrix is {\it expansive} if all its eigenvalues have absolute value strictly greater than $1$. 

Let $B$ be a finite set of points $\bz^d$, of cardinality $|B|=:N$.

For each $b\in B$, we define the following affine maps on $\br^d$,
\begin{equation}\label{eqtaub}
\tau_b(x)=A^{-1}(x+b),\quad(x\in\br^d).
\end{equation}
The family of functions $(\tau_b)_{b\in B}$ is called an {\it affine iterated function system (affine IFS)}. 
\end{definition}

\subsection{The attractor}\label{att}

Since the matrix $A$ is expansive, there is a norm on $\br^d$ such that all the maps $\tau_b$ are contractive. Therefore the following general theorem for contractive iterated applies:
\begin{theorem}\label{thattr}\cite{Hut81}
Let $(\tau_i)_{i=1}^N$ be a system of $N$ contractive maps on a complete metric space $X$. Then there is a unique compact subset $K\subset X$ such that
\begin{equation}\label{eqattr}
K=\bigcup_{i=1}^N \tau_i(K).
\end{equation}
\end{theorem}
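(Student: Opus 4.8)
The plan is to deduce the statement from the Banach contraction mapping principle applied to a complete metric space built from $X$. First I would pass to the space $\K(X)$ of all nonempty compact subsets of $X$, equipped with the Hausdorff metric
\[
d_H(E,F)=\max\left\{\sup_{x\in E}\operatorname{dist}(x,F),\ \sup_{y\in F}\operatorname{dist}(y,E)\right\}.
\]
The first substantive step is to verify that $(\K(X),d_H)$ is itself complete whenever $X$ is complete: given a $d_H$-Cauchy sequence $(E_n)$, one forms the candidate limit
\[
E:=\{\,x\in X : x=\lim_{k}x_{n_k}\text{ for some }x_{n_k}\in E_{n_k},\ n_k\uparrow\infty\,\}
\]
and checks that $E$ is nonempty, compact, and that $d_H(E_n,E)\to 0$.

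Next I would define the Hutchinson operator $W\colon\K(X)\to\K(X)$ by $W(E)=\bigcup_{i=1}^N\tau_i(E)$. This is well defined: each $\tau_i$ is continuous, so each $\tau_i(E)$ is compact, and a finite union of compact sets is compact. Writing $c_i<1$ for a contraction constant of $\tau_i$ and $c:=\max_i c_i<1$, I would then use the two elementary facts that $d_H(\tau_i(E),\tau_i(F))\le c_i\,d_H(E,F)$ and $d_H\!\left(\bigcup_i A_i,\bigcup_i B_i\right)\le\max_i d_H(A_i,B_i)$ to conclude
\[
d_H(W(E),W(F))\le c\,d_H(E,F),
\]
so that $W$ is a strict contraction on the complete metric space $\K(X)$. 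By the Banach fixed point theorem there is a unique $K\in\K(X)$ with $W(K)=K$, which is precisely \eqref{eqattr}; uniqueness is immediate, since any two fixed points $K,K'$ satisfy $d_H(K,K')=d_H(W(K),W(K'))\le c\,d_H(K,K')$, forcing $K=K'$.

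The one place that requires genuine care — and the main obstacle — is the completeness of $(\K(X),d_H)$: one must show that the set $E$ extracted from a Cauchy sequence is nonempty and compact (total boundedness together with closedness, using completeness of $X$ for the latter) and that the convergence $E_n\to E$ is uniform in the Hausdorff sense. Once that lemma is in hand, the contraction estimate for $W$ and the appeal to Banach's theorem are routine. (As a technical convenience one may first replace $X$ by a large closed ball $\overline{B}(x_0,r)$ with $r$ chosen so that $\tau_i(\overline{B}(x_0,r))\subset\overline{B}(x_0,r)$ for all $i$, which is possible because the $\tau_i$ are contractions; this does not change the argument but localizes it.)
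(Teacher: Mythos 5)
Your proof is correct and follows essentially the same route the paper indicates: the paper's own sketch (in Example \ref{excantor}) also views $H\mapsto\bigcup_{i=1}^N\tau_i(H)$ as a contraction on the space of compact subsets with the Hausdorff metric and invokes the Banach fixed point principle. You simply supply the standard supporting details (completeness of the hyperspace and the contraction estimate), which the paper leaves to the reference \cite{Hut81}.
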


\begin{definition}\label{defattr}
The compact set $K$ defined by equation \eqref{eqattr} in Theorem \ref{thattr} is called the {\it attractor} of the {\it iterated function system} $(\tau_i)_{i=1}^N$.
\end{definition}

\begin{example}\label{excantor}\textup{(The middle third Cantor set)}
Let us consider the middle third Cantor set $\mathbf C$. This set can be regarded as the attractor of an affine iterated function system in $\br$. So $d=1$. The matrix is $A=3$, the set $B$ is $B=\{0,2\}$. Therefore the maps are 
$\tau_0(x)=\frac{x}{3}$ and $\tau_2(x)=\frac{x+2}{3}$, $x\in\br$. We check the equation \eqref{eqattr} for the Cantor set $\mathbf C$: $\tau_0(\mathbf C)$ is a scaled down version of the Cantor set, sitting in $[0,\frac13]$; $\tau_2(\mathbf C)$ is another scaled down version of the Cantor set, sitting in $[\frac23,1]$. Their union is indeed the Cantor set. 

The proof of Theorem \ref{thattr} is based on the Banach fixed point principle: the map $M:H\mapsto \cup_{i=1}^N\tau_i(H)$ is contractive on the set of compact subsets with the Hausdorff metric. Therefore, the attractor is the limit of the iterations $M^n(H)$ for any compact starting set $H$. 

In the case of the middle third Cantor set, let us start with the unit interval $[0,1]$; then $M([0,1])=\tau_0([0,1])\cup\tau_2([0,1])=[0,\frac13]\cup[\frac23,1]$.\\ At the next step $M^2([0,1])=\tau_0(M([0,1]))\cup\tau_2(M([0,1]))=[0,\frac19]\cup[\frac29,\frac13]\cup[\frac23,\frac79]\cup[\frac89,1]$. We see the same usual procedure of constructing the Cantor set.
\end{example}
\begin{remark}
For a general affine iterated system $(\tau_b)_{b\in B}$, the equation \eqref{eqattr} defining the attractor $K=:X_B$ can be rewritten
$$AX_B=\bigcup_{b\in B}(X_B+b).$$
Actually, the attractor has the following representation:
$$X_B=\left\{\sum_{k=1}^\infty A^{-k}b_k\,|\,b_k\in B\right\}.$$
If we assume that there is no overlap between the sets $X_B+b$, $b\in B$, we can construct the function  $\phi_B:=\chi_{X_B}$ and it satisfies
\begin{equation}\label{eqscalattr}
\phi_B(A^{-1}x)=\sum_{b\in B}\phi_B(x-b),\quad(x\in\br^d).
\end{equation}
But that is precisely a form of the scaling equation in multiresolution wavelet theory (see more below)! However the role of the scaling function will not be played here by the attractor of the iterated function system, but by an associated invariant measure. There is a direction of research which exploits equation \eqref{eqscalattr}, which enables one to construct Haar type wavelets on fractal measures.  This was pursued in \cite{DuJo06a,Jonas06} where wavelets were constructed on Cantor sets and Sierpinski gaskets.
\end{remark}

\subsection{Multiresolutions}\label{mra}

Let us recall the key facts of multiresolution wavelet theory. The purpose of multiresolution theory is to construct wavelets, that is orthonormal bases for $L^2(\br^d)$ of the form 
\begin{equation}\label{eqwav}
\left\{|\det A|^{j/2}\psi_i(A^j\cdot-k)\,|\,j\in\bz,k\in\bz^d,i\in\{1,\dots,L\}\right\},
\end{equation}
where $\psi_1,\dots,\psi_L$ are some functions in $L^2(\br^d)$ which will be called {\it wavelets}.

Multiresolutions were introduced by Mallat in \cite{Mal89}. 

\begin{definition}\label{defmul}
A {\it multiresolution} is a sequence of subspaces $(V_n)_{n\in\bz}$ of $L^2(\br^d)$ with the following properties:

\begin{enumerate}
\item $V_n\subset V_{n+1}$, for all $n\in\bz$;
\item $\bigcup_{n\in\bz}V_n$ is dense in $L^2(\br^d)$;
\item $\bigcap_{n\in\bz}V_n=\{0\}$;
\item $f\in V_n$ if and only if $f(A\cdot)\in V_{n+1}$;
\item There exists a function $\varphi\in V_0$ such that
$$\left\{\varphi(\cdot-k)\,|\,k\in\bz^d\right\}$$
is an orthonormal basis for $V_0$.
\end{enumerate}

The function $\varphi$ is called the {\it scaling function}.
\end{definition}

If one has a multiresolution, then wavelets can be constructed by considering the {\it detail space} $W_0:=V_1\ominus V_0$. Here on can find vectors $\psi_1,\dots,\psi_L$ such that their translations $\{\psi_i(\cdot-k)\,|\,k\in\bz^d,i\in\{1,\dots,L\}\}$ form an orthonormal basis for $W_0$. Then, applying the dilation, all the detail spaces $V_{n+1}\ominus V_n$, and the orthonormal basis in (\eqref{eqwav}) is obtained from the 
properties of the multiresolution.
 
\subsection{The scaling equation, the invariance equation}\label{scal}
The scaling function $\varphi$ in Definition \ref{defmul}(v) satisfies an important equation, called the {\it scaling equation}. This is obtained by considering the function $\varphi(A^{-1}\cdot)$ which lies in $V_{-1}\subset V_0$. Since the translates of $\varphi$ form a basis for $V_0$, the scaling equation is obtained:
\begin{equation}\label{eqscaling}
\frac{1}{\sqrt{|\det A |}}\varphi(A^{-1}x)=\sum_{k\in\bz}a_k\varphi(x-k),\quad(x\in\br^d),
\end{equation}
where $(a_k)_{k\in\bz}$ is a sequence of complex numbers.

\medskip
The role of the scaling function in the harmonic analysis of fractal measures is played by an invariant measure. 
\begin{theorem}\label{thinvmea}\cite{Hut81}
Let $(\tau_i)_{i=1}^N$ be a contractive iterated function system on a complete metric space $X$. Let $p_1,\dots, p_N\in[0,1]$ be a list of probabilities.  Then there is a unique probability measure $\mu_p$ on $X$ such that
\begin{equation}\label{eqinvmea}
\mu_p(E)=\sum_{i=1}^Np_i\mu_p(\tau_i^{-1}(E)),\quad\mbox{ for all Borel subsets }E.
\end{equation}
Moreover, the measure $\mu_p$ is supported on the attractor of the iterated function system $(\tau_i)_{i=1}^N$ from Definition \ref{defattr}.
\end{theorem}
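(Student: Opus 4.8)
The plan is to prove Theorem~\ref{thinvmea} by the Banach fixed point principle, in exact parallel with the proof of Theorem~\ref{thattr} indicated in Example~\ref{excantor}, but carried out on a space of probability measures rather than of compact sets. Fix a base point $x_0\in X$, let $c:=\max_i\operatorname{Lip}(\tau_i)<1$, and let $\mathcal{P}$ denote the set of Borel probability measures $\nu$ on $X$ with finite first moment $\int_X d(x,x_0)\,d\nu(x)<\infty$, equipped with the Hutchinson (Kantorovich--Rubinstein) distance
$$d_H(\mu,\nu)=\sup\Big\{\Big|\int_X f\,d\mu-\int_X f\,d\nu\Big| : f\colon X\to\br,\ \operatorname{Lip}(f)\le 1\Big\},$$
which is finite on $\mathcal{P}$ since a $1$-Lipschitz $f$ normalized by $f(x_0)=0$ obeys $|f(x)|\le d(x,x_0)$. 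On $\mathcal{P}$ I would introduce the Markov operator $M_p\nu:=\sum_{i=1}^N p_i\,\nu\circ\tau_i^{-1}$, i.e. $\int f\,d(M_p\nu)=\sum_{i=1}^N p_i\int(f\circ\tau_i)\,d\nu$; because each $\tau_i$ is Lipschitz and $\sum_i p_i=1$ (forced by taking $E=X$ in \eqref{eqinvmea}), $M_p$ maps $\mathcal{P}$ into itself, and a fixed point of $M_p$ is exactly a probability measure satisfying \eqref{eqinvmea}.

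Next I would verify the two hypotheses of Banach's theorem. The contraction bound is the short half: if $\operatorname{Lip}(f)\le 1$ then $\operatorname{Lip}(f\circ\tau_i)\le c$ for each $i$, so
$$\Big|\int f\,d(M_p\mu)-\int f\,d(M_p\nu)\Big|=\Big|\sum_{i=1}^N p_i\Big(\int f\circ\tau_i\,d\mu-\int f\circ\tau_i\,d\nu\Big)\Big|\le c\,d_H(\mu,\nu),$$
and taking the supremum over $f$ gives $d_H(M_p\mu,M_p\nu)\le c\,d_H(\mu,\nu)$. Completeness of the ambient metric space is the step I expect to be the actual obstacle: proving directly that $(\mathcal{P},d_H)$ is complete requires showing that a $d_H$-Cauchy sequence is tight, extracting a weak limit, and checking the limit still has finite first moment and is $d_H$-close to the tail. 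It is cleanest to bypass this using Theorem~\ref{thattr}: with $K$ the attractor one has $\tau_i(K)\subset K$, so $M_p$ restricts to the subspace $\mathcal{P}(K)\subset\mathcal{P}$ of measures supported on the compact set $K$, and $(\mathcal{P}(K),d_H)$ is compact (Prokhorov, since on a compact metric space $d_H$ metrizes the weak topology), hence complete.

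Banach's theorem then gives a unique $\mu_p\in\mathcal{P}(K)$ with $M_p\mu_p=\mu_p$, together with $M_p^n\nu\to\mu_p$ in $d_H$ for every $\nu\in\mathcal{P}(K)$; this is the existence of a solution of \eqref{eqinvmea}. For uniqueness among \emph{all} probability measures on $X$ I would note that any solution $\nu$ is automatically supported on $K$ --- a routine tightness argument: iterate \eqref{eqinvmea} to get $\nu=\sum_{|w|=n}p_w\,\nu\circ\tau_w^{-1}$, use that contractions are injective to see that $\tau_w^{-1}$ of the complement of $\bigcup_{|w|=n}\tau_w(\overline{B(x_0,R)})$ lies outside $\overline{B(x_0,R)}$, and let $R\to\infty$ and $n\to\infty$ while recalling that these sets converge to $K$ in Hausdorff distance --- so $\nu\in\mathcal{P}(K)$ and hence $\nu=\mu_p$. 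Finally, for the support statement, apply $M_p^n$ to $\delta_{x_0}$ with $x_0\in K$: each $M_p^n\delta_{x_0}$ is a finite convex combination of point masses at the compositions $\tau_{i_1}\circ\cdots\circ\tau_{i_n}(x_0)$, all in $K$ by invariance, hence is supported in the closed set $K$; since $d_H$-convergence forces weak convergence, $\mu_p$ is supported in $K$, i.e. $\operatorname{supp}(\mu_p)\subseteq K$. (When every $p_i>0$ this is an equality: \eqref{eqinvmea} gives $\mu_p(\tau_i(E))\ge p_i\mu_p(E)$, so $\mu_p(\tau_w(K))\ge p_w>0$ for every finite word $w$, and the cylinders $\tau_w(K)$ shrink to points while covering $K$.)
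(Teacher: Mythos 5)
Your proof is correct, and it is exactly the contraction-mapping argument of Hutchinson \cite{Hut81} that the paper cites for Theorem \ref{thinvmea} (the paper itself supplies no proof, only the citation, and its remark after Example \ref{excantor} indicates precisely this Banach fixed-point strategy, which you transplant from compact sets with the Hausdorff metric to probability measures with the Kantorovich--Rubinstein metric, handling completeness sensibly by restricting to measures on the compact attractor $K$). One cosmetic slip: contractions need not be injective, but your preimage step never uses injectivity --- if $x\in\overline{B(x_0,R)}$ and $\tau_w(x)=y$ then $y\in\tau_w\left(\overline{B(x_0,R)}\right)$ by the definition of image, so the tightness argument for uniqueness among all Borel probability measures goes through as written.
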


In this paper we will work only with equal probabilities $p_i=\frac1N$.
\begin{definition}\label{definvmea}
The unique probability measure $\mu_B$ satisfying 
\begin{equation}\label{eqinvmeab}
\mu_B(E)=\frac{1}{N}\sum_{b\in B}\mu_B(\tau_b^{-1}(E)),\quad\mbox{for all Borel subsets } E,
\end{equation}
is called the {\it invariant measure} associated to the affine iterated function system $(\tau_b)_{b\in B}$.
Equation \eqref{eqinvmeab} is called the {\it invariance equation}.
\end{definition}

\begin{example}
Consider the IFS associated to the middle third Cantor set as in Example \ref{excantor}. The invariant measure $\mu_B$ in this case is the Hausdorff measure corresponding to the Hausdorff dimension $\frac{\ln2}{\ln3}$. In this measure the piece of the Cantor set in $[0,\frac13]$ has measure $\frac12$, the piece of the Cantor set in $[0,\frac19]$ has measure $\frac14$, etc.
\end{example}

\begin{remark}
      It should be stressed that the affine fractals $X$ constructed by IFS-iterations, i.e., from the Cantor-Hutchinson schemes (Theorem \ref{thattr} and Example \ref{excantor}), are non-linear objects. As attractors for IFSs, they are ``chaotic''; see e.g., \cite{Bar06}. This is similarly true for Julia sets from complex dynamics (section 1; i.e., invariant sets that are attractors under iteration of rational functions). They too are non-linear, and they carry no group structure; so there is no available Haar measure.

       We prove that Definition \ref{definvmea} offers a substitute for Haar measure in these fractal settings. One of the conclusions of our work (\cite{DuJo06a, DuJo07b, JoPe94, JoPe98}) is to establish that despite the chaos features of the attractors $X$, there is nonetheless a substitute for Haar measure on these sets $X$. Now the measures from Theorem \ref{thinvmea} form a simplex of measures $\mu_p$ indexed by $p$, where $p = (p_i)_{i=1}^N$ ranges over the probability measures on $\{1, 2,\dots, N\}$. Each of them plays an important role in non-commutative geometry, see \cite{BJO04}.

       In \cite{DuJo07b}, we show that when an IFS is given, then the measures $\mu_p$ on $X$ are induced by infinite product measures corresponding to a choice of probability distribution $(p_i)_{i=1}^N$. The choice of $p_i = 1/N$ in Definition \ref{definvmea} is motivated by the search for spectra for our fractals $X$ in the sense of Definition \ref{defspectral}: A theorem in \cite{DuJo07a} states that for non-uniform weights the Hadamard condition below (Assumption \ref{ass}) is not satisfied, and presumably the measures $\mu_B$ never have spectra. (See also Problem \ref{prob2}.) As a result, the choice of measures in Definition \ref{definvmea} (i.e., uniform weights) serves as a substitute for Haar measure in a non-linear setting. The measures of Brolin \cite{Bro65} supported on Julia sets in the complex plane may also be thought of as substitutes for Haar measure in a fractal setting where there is none. For a fixed Julia set, the associated Brolin measure too has the form  $\mu_p$  for the uniform choice of probability distribution $(p_i)_{i=1}^N$, i.e.,  the choice $p_i = 1/N$ . A Julia set $X$ is constructed from a fixed rational function $R = R(z)$  of degree $N$, and the idea is to get $X = X(R)$ as an IFS attractor for the IFS resulting from a choice of $N$ branches of the inverse of $z\mapsto R(z)$ in the complex plane.
\end{remark}

We can rewrite the invariance equation for continuous compactly supported functions on $\br^d$ as follows:
\begin{equation}\label{eqinvariance}
\int f\,d\mu_B=\frac 1N\sum_{b\in B}\int f\circ\tau_b\,d\mu_B,\quad(f\in C_c(\br^d)).
\end{equation}
or
\begin{equation}\label{eqinv2}
\int f(Ax)\,d\mu_B(x)=\frac1N\sum_{b\in B}\int f(x+b)\,d\mu_B(x),\quad(f\in C_c(\br^d).
\end{equation}
Note that both the scaling equation \eqref{eqscaling} and the invariance equation \eqref{eqinv2} express the dilation of an object in terms of the sum of translated copies of the same object. The resemblance will be more apparent when we take the Fourier transform of both equations.

The Fourier transform of the scaling function is $$\hat\varphi(x)=\int_{\br}\varphi(t)e^{-2\pi i x\cdot t}\,dt.$$
Applying the Fourier transform to the scaling equation \eqref{eqscaling} one obtains:
\begin{proposition}\label{propscalingf}
The following {\it scaling equation} is satisfied:
\begin{equation}\label{eqscalingf}
\hat\varphi(A^Tx)=m_0((A^T)^{-1}x)\hat\varphi((A^T)^{-1}x),\quad(x\in\br^d),
\end{equation}
where $A^T$ is the transpose of the matrix $A$, and 
\begin{equation}\label{eqm0}
m_0(x):=\frac1{|\det A|}\sum_{k\in\bz}a_ke^{2\pi ik\cdot x},\quad(x\in\br^d).
\end{equation}
\end{proposition}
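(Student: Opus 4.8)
The plan is to take the Fourier transform of the scaling equation \eqref{eqscaling} directly and then rewrite the result in terms of the transpose matrix $A^T$. First I would recall that the Fourier transform on $\br^d$ turns dilation by a matrix into dilation by its inverse transpose: if $g(x)=f(Cx)$ for an invertible matrix $C$, then $\hat g(x)=\frac{1}{|\det C|}\hat f((C^T)^{-1}x)$. Applying this with $C=A^{-1}$ to the left-hand side of \eqref{eqscaling}, the term $\frac{1}{\sqrt{|\det A|}}\varphi(A^{-1}x)$ has Fourier transform $\frac{1}{\sqrt{|\det A|}}\cdot|\det A|\,\hat\varphi(A^Tx)=\sqrt{|\det A|}\,\hat\varphi(A^Tx)$. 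Here I would be a little careful about which normalization of the Fourier transform is in force (the one displayed just before the proposition, $\hat\varphi(x)=\int\varphi(t)e^{-2\pi i x\cdot t}\,dt$), since the $\sqrt{|\det A|}$ factors have to come out right; this bookkeeping is the only place where an error could creep in.

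Next I would handle the right-hand side of \eqref{eqscaling}. Translation by $k$ becomes multiplication by the character $e^{-2\pi i k\cdot x}$ under the Fourier transform: the Fourier transform of $\varphi(\cdot-k)$ is $e^{-2\pi i k\cdot x}\hat\varphi(x)$. Hence the right-hand side transforms to $\left(\sum_{k\in\bz^d}a_k e^{-2\pi i k\cdot x}\right)\hat\varphi(x)$. Interchanging sum and integral is justified because $(a_k)$ is (at least formally, and in the cases of interest) summable, or one works first with finitely supported masks and passes to a limit; I would not belabor this. Comparing the two sides gives
\begin{equation*}
\sqrt{|\det A|}\,\hat\varphi(A^Tx)=\left(\sum_{k\in\bz^d}a_ke^{-2\pi ik\cdot x}\right)\hat\varphi(x).
\end{equation*}

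Finally I would reconcile the sign and the normalization with the statement. The proposition defines $m_0(x)=\frac{1}{|\det A|}\sum_k a_k e^{2\pi i k\cdot x}$, so up to the conventional replacement $x\mapsto -x$ (equivalently, choosing the opposite sign convention in the character, which is immaterial since $\Lambda$ and $-\Lambda$ play symmetric roles) and absorbing the scalar $\sqrt{|\det A|}$ into the definition of $m_0$ via the $\frac{1}{|\det A|}$ prefactor, the displayed identity is exactly $\hat\varphi(A^Tx)=m_0(x)\hat\varphi(x)$ after also rescaling $\hat\varphi$ by the appropriate power of $|\det A|$. To match the stated form \eqref{eqscalingf} precisely, I would substitute $x\mapsto (A^T)^{-1}x$ throughout, turning $\hat\varphi(A^Tx)=m_0(x)\hat\varphi(x)$ into $\hat\varphi(x)=m_0((A^T)^{-1}x)\hat\varphi((A^T)^{-1}x)$, which is \eqref{eqscalingf}. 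The main obstacle, such as it is, is purely a matter of keeping the determinant factors and the sign convention consistent between the two normalizations in play; there is no substantive analytic difficulty, only the need to state precisely which Fourier and which $m_0$ normalization makes the two $|\det A|$ contributions cancel.
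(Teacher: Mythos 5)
Your computation is essentially the paper's (implicit) proof: the proposition is stated there as the immediate result of Fourier-transforming \eqref{eqscaling}, and your two transform rules (dilation $\varphi(A^{-1}\cdot)\mapsto|\det A|\,\hat\varphi(A^T\cdot)$ and translation $\varphi(\cdot-k)\mapsto e^{-2\pi i k\cdot x}\hat\varphi$) give exactly that, yielding $\sqrt{|\det A|}\,\hat\varphi(A^Tx)=\bigl(\sum_k a_k e^{-2\pi i k\cdot x}\bigr)\hat\varphi(x)$. One detail of your bookkeeping is off, though it concerns only constants the paper itself fudges: the leftover factor $\sqrt{|\det A|}$ cannot be absorbed by ``rescaling $\hat\varphi$,'' since $\hat\varphi$ appears on both sides of the relation and a constant rescaling cancels; the factor disappears only if the normalizations of \eqref{eqscaling} and \eqref{eqm0} are made mutually consistent (e.g.\ $\tfrac{1}{|\det A|}$ in place of $\tfrac{1}{\sqrt{|\det A|}}$ in \eqref{eqscaling}, or $m_0$ defined with $|\det A|^{-1/2}$), which, together with the sign convention in the character, is evidently what the paper intends. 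Note also that the identity you actually derive, $\hat\varphi(x)=m_0((A^T)^{-1}x)\hat\varphi((A^T)^{-1}x)$, is the correct reading of \eqref{eqscalingf} --- it is the form consistent with \eqref{eqinvariancef} and with the infinite product \eqref{eqinfprodsc} --- the printed $\hat\varphi(A^Tx)$ on the left-hand side being a typo, so your final substitution lands on the intended statement.
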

The $\bz^d$-periodic function $m_0$ is called the {\it low-pass filter} in wavelet theory.

For the invariant measure $\mu_B$, the Fourier transform is 
$$\hat\mu_B(x)=\int e^{2\pi i x\cdot t}\,d\mu_B(t)$$
and the invariance equation implies
\begin{proposition}\label{propinvariancef}
The Fourier transform of the measure $\mu_B$ satisfies:
\begin{equation}\label{eqinvariancef}
\hat\mu_B(x)=m_B((A^T)^{-1}x)\hat\mu_B((A^T)^{-1}x),\quad(x\in\br^d),
\end{equation}
where 
\begin{equation}\label{eqmb}
m_B(x)=\frac{1}{N}\sum_{b\in B}e^{2\pi ib\cdot x},\quad(x\in\br^d).
\end{equation}
\end{proposition}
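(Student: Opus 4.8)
The plan is to feed the characters $x\mapsto e^{2\pi i x\cdot y}$ into the invariance equation and read off \eqref{eqinvariancef} directly. First I would upgrade the invariance equation \eqref{eqinvariance}--\eqref{eqinv2} from test functions in $C_c(\br^d)$ to all bounded continuous functions. This is where the only real (and mild) subtlety lies: the exponentials $e_y(x)=e^{2\pi i x\cdot y}$ are not compactly supported. However, $\mu_B$ is a \emph{probability} measure with \emph{compact} support (Theorem \ref{thinvmea}), so one may either multiply $e_y$ by a cutoff that equals $1$ on a neighborhood of $\operatorname{supp}(\mu_B)\cup\bigcup_b\tau_b(\operatorname{supp}(\mu_B))$, or simply observe that the defining relation \eqref{eqinvmeab}, $\mu_B(E)=\tfrac1N\sum_{b\in B}\mu_B(\tau_b^{-1}(E))$, passes through simple functions and then (by dominated convergence, using finiteness of $\mu_B$) to every bounded Borel function $g$, giving
\begin{equation*}
\int g\,d\mu_B=\frac1N\sum_{b\in B}\int g\circ\tau_b\,d\mu_B .
\end{equation*}

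Next I would apply this identity, or equivalently the form \eqref{eqinv2}, with $f=e_y$. For the left-hand side, $f(Ax)=e^{2\pi i(Ax)\cdot y}=e^{2\pi i x\cdot(A^Ty)}$, so $\int f(Ax)\,d\mu_B(x)=\hat\mu_B(A^Ty)$. For the right-hand side, $f(x+b)=e^{2\pi i b\cdot y}e^{2\pi i x\cdot y}$, and pulling the constant $e^{2\pi i b\cdot y}$ out of the integral yields
\begin{equation*}
\frac1N\sum_{b\in B}\int f(x+b)\,d\mu_B(x)=\left(\frac1N\sum_{b\in B}e^{2\pi i b\cdot y}\right)\hat\mu_B(y)=m_B(y)\,\hat\mu_B(y),
\end{equation*}
with $m_B$ exactly as in \eqref{eqmb}. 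Hence $\hat\mu_B(A^Ty)=m_B(y)\hat\mu_B(y)$ for all $y\in\br^d$.

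Finally I would substitute $y=(A^T)^{-1}x$ (legitimate since $A$, hence $A^T$, is expansive and in particular invertible), which turns $\hat\mu_B(A^Ty)=m_B(y)\hat\mu_B(y)$ into the asserted identity
\begin{equation*}
\hat\mu_B(x)=m_B((A^T)^{-1}x)\,\hat\mu_B((A^T)^{-1}x),\qquad(x\in\br^d).
\end{equation*}
The only step requiring any care is the first one (extending the invariance identity to the non-compactly-supported test functions $e_y$); everything after that is the short computation above. One could also remark in passing that this is the Fourier-side analogue of Proposition \ref{propscalingf}, with $m_B$ playing the role of the low-pass filter $m_0$ and $\hat\mu_B$ that of $\hat\varphi$.
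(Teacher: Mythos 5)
Your proof is correct and follows exactly the route the paper intends: the paper states Proposition \ref{propinvariancef} as an immediate consequence of the invariance equation \eqref{eqinv2}, which is precisely your computation with $f=e_y$ followed by the substitution $y=(A^T)^{-1}x$. Your remark on extending the identity from $C_c(\br^d)$ to bounded (continuous or Borel) functions is a valid and harmless addition, justified since $\mu_B$ is a compactly supported probability measure.
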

We will call the function $m_B$ the {\it low-pass filter} of the affine IFS $(\tau_b)_{b\in B}$.

We mentioned above that the wavelets are constructed from a multiresolution. The multiresolution is constructed from the scaling function, which in turn is built from the low-pass filter $m_0$. If we iterate formally \eqref{eqscalingf}, and impose the condition $m_0(0)=1$ (from which the name ``low-pass'' comes from), we get the following:
\begin{proposition}\label{propinfprodsc}
 The infinite product formula for $\hat\varphi$ is
\begin{equation}\label{eqinfprodsc}
\hat\varphi(x)=\prod_{k=1}^\infty m_0((A^T)^{-n}x),\quad(x\in\br^d).
\end{equation}
\end{proposition}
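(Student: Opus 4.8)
The plan is to derive the infinite product formula \eqref{eqinfprodsc} by formally iterating the functional equation \eqref{eqscalingf}. Starting from
\[
\hat\varphi(A^Tx)=m_0((A^T)^{-1}x)\,\hat\varphi((A^T)^{-1}x),
\]
I first rewrite it with the argument shifted so that it reads $\hat\varphi(x)=m_0((A^T)^{-1}x)\,\hat\varphi((A^T)^{-1}x)$, i.e.\ each evaluation of $\hat\varphi$ is expressed in terms of its value one ``level'' down under the contraction $(A^T)^{-1}$ (contraction in the sense that $A$, hence $A^T$, is expansive). Then I apply the same identity to the factor $\hat\varphi((A^T)^{-1}x)$, replacing $x$ by $(A^T)^{-1}x$, to obtain
\[
\hat\varphi(x)=m_0((A^T)^{-1}x)\,m_0((A^T)^{-2}x)\,\hat\varphi((A^T)^{-2}x),
\]
and iterate: after $n$ steps,
\[
\hat\varphi(x)=\left(\prod_{k=1}^{n}m_0((A^T)^{-k}x)\right)\hat\varphi((A^T)^{-n}x).
\]
This is the routine algebraic core; it is a straightforward induction on $n$.

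The remaining point is to pass to the limit $n\to\infty$. Since $A^T$ is expansive, $(A^T)^{-n}x\to 0$ for every fixed $x\in\br^d$; and $\hat\varphi$ is continuous (being the Fourier transform of the compactly supported — or at least integrable — scaling function $\varphi$) with the normalization $\hat\varphi(0)=\int\varphi=1$, which is exactly the content of the imposed condition $m_0(0)=1$ together with the convention that $\varphi$ integrates to one. Hence $\hat\varphi((A^T)^{-n}x)\to\hat\varphi(0)=1$, and the finite product telescopes into the claimed infinite product
\[
\hat\varphi(x)=\prod_{k=1}^{\infty}m_0((A^T)^{-k}x).
\]
Convergence of the infinite product for each fixed $x$ follows from the same limiting statement read backwards: the partial products converge because they equal $\hat\varphi(x)/\hat\varphi((A^T)^{-n}x)$ and the denominator tends to the nonzero limit $1$.

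I should note that the statement is explicitly a \emph{formal} one — the Proposition says ``if we iterate formally'' — so I would not belabor the analytic hypotheses (decay of $m_0-1$ near $0$, uniform convergence on compacta, membership of the product in $L^2$, etc.); those are precisely the issues that Cohen's condition and the transfer-operator analysis in the later subsections are designed to address. The only genuine subtlety worth a sentence is that the normalization $m_0(0)=1$ is what makes the ``boundary term'' $\hat\varphi((A^T)^{-n}x)$ converge to $1$ rather than to $0$ or diverge; without it the iteration produces no useful limit. So the main obstacle is essentially bookkeeping: being careful about which argument the filter is evaluated at (matching $(A^T)^{-k}x$ correctly at each stage of the iteration, including the off-by-one in going from \eqref{eqscalingf} as written to the shifted form), and flagging that the identity is to be read formally, with rigorous justification deferred to the transfer-operator sections.
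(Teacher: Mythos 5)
Your proposal is correct and follows exactly the route the paper intends: the paper offers no separate proof beyond the remark that one iterates \eqref{eqscalingf} formally with the normalization $m_0(0)=1$ (deferring convergence to the Lipschitz remark afterwards), which is precisely your induction plus the observation that $(A^T)^{-n}x\to 0$ and $\hat\varphi((A^T)^{-n}x)\to\hat\varphi(0)=1$. Your handling of the off-by-one in \eqref{eqscalingf} as printed, and your explicit note that the statement is formal with analytic issues deferred, match the paper's treatment.
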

The infinite product is uniformly convergent on compact subsets if $m_0$ is assumed to be Lipschitz. 

In the same fashion, the Fourier transform of the invariant measure, $\hat\mu_B$ has an infinite product formula in terms of the low-pass filter $m_B$ (which satisfies $m_B(0)=1$):

\begin{proposition}\label{propinfprod}
The infinite product formula for $\hat\mu_B$ is
\begin{equation}\label{eqinfprodme}
\hat\mu_B(x)=\prod_{k=1}^\infty m_B((A^T)^{-n}x),\quad(x\in\br^d).
\end{equation}
\end{proposition}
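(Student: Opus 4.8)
The plan is to derive \eqref{eqinfprodme} by iterating the functional equation \eqref{eqinvariancef} of Proposition \ref{propinvariancef} and then passing to the limit, with the expansiveness of $A$ used to kill the remainder term. First I would iterate: replacing $x$ by $(A^T)^{-1}x$ in \eqref{eqinvariancef} and substituting repeatedly, a straightforward induction gives, for every $n\ge 1$,
\begin{equation*}
\hat\mu_B(x)=\left(\prod_{k=1}^n m_B\bigl((A^T)^{-k}x\bigr)\right)\hat\mu_B\bigl((A^T)^{-n}x\bigr),\qquad(x\in\br^d).
\end{equation*}
Thus the proposition reduces to two points: (a) the partial products converge as $n\to\infty$, and (b) the remainder $\hat\mu_B((A^T)^{-n}x)$ tends to $1$.

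For (b), recall that $\mu_B$ is a Borel probability measure with compact support (Theorem \ref{thinvmea}), so $\hat\mu_B$ is continuous on $\br^d$ with $\hat\mu_B(0)=\mu_B(\br^d)=1$. Since $A$ is expansive, its transpose $A^T$ has the same eigenvalues and is also expansive; hence (as in Section \ref{att}) there is an adapted norm $\|\cdot\|$ on $\br^d$ and a constant $\rho\in(0,1)$ with $\|(A^T)^{-1}y\|\le\rho\|y\|$, so that $\|(A^T)^{-n}x\|\le\rho^n\|x\|\to 0$. Continuity of $\hat\mu_B$ at the origin then gives $\hat\mu_B((A^T)^{-n}x)\to 1$, uniformly for $x$ in a compact set. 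For (a), note that $m_B$ in \eqref{eqmb} is a trigonometric polynomial, hence globally Lipschitz, and $m_B(0)=\frac1N\sum_{b\in B}1=1$; therefore $|m_B(y)-1|=|m_B(y)-m_B(0)|\le L\|y\|$ for some $L$, and with $y=(A^T)^{-k}x$ we get $\sum_{k\ge1}|m_B((A^T)^{-k}x)-1|\le L\|x\|\sum_{k\ge1}\rho^k<\infty$. So the infinite product in \eqref{eqinfprodme} converges absolutely, uniformly on compact subsets of $\br^d$, and letting $n\to\infty$ in the displayed identity yields the claim.

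I do not expect a real obstacle here; the only step deserving a word of care is the quantitative decay $\|(A^T)^{-n}x\|\to 0$, which rests on the standard fact that an expansive matrix becomes a strict contraction once one passes to its inverse in a suitably adapted norm — the same fact already invoked to make the maps $\tau_b$ contractive. One could equally well bypass the adapted norm by using the spectral-radius identity $\limsup_n\|(A^T)^{-n}\|^{1/n}=r(A^{-1})<1$, which suffices for both (a) and (b). Everything else — continuity of $\hat\mu_B$, its value $1$ at $0$, and the Lipschitz estimate for the trigonometric polynomial $m_B$ — is routine, and the argument is the exact analogue of the derivation of the infinite product \eqref{eqinfprodsc} for $\hat\varphi$.
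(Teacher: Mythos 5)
Your argument is correct and is essentially the paper's own route: the paper obtains \eqref{eqinfprodme} by (formally) iterating the functional equation \eqref{eqinvariancef} with $m_B(0)=1$, noting that the product converges uniformly on compact sets because $m_B$ is a trigonometric polynomial. Your write-up simply fills in the details the paper leaves implicit (the remainder $\hat\mu_B((A^T)^{-n}x)\to1$ via expansiveness and continuity at $0$, and the geometric Lipschitz bound for the factors), so there is nothing to correct.
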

The infinite product is uniformly convergent on compact subsets because $m_B$ is a trigonometric polynomial.

\subsection{Orthogonality of the translates of the scaling function; orthogonality of the exponentials}\label{ortho}
For the scaling function, one wants the translates $\varphi(\cdot-k)$, $k\in\bz^d$ to be orthogonal. Taking the Fourier transform and using a periodization, one arrives at the following condition: 

\begin{proposition}\label{proporthosc}
The translates of the scaling function are orthogonal if and only if
\begin{equation}\label{eqorthosc}
h_\varphi(x):=\sum_{k\in\bz}|\hat\varphi(x+k)|^2=1\quad (x\in\br^d).
\end{equation}
\end{proposition}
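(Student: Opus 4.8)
The plan is to move to the Fourier side, where translation becomes modulation, and recognize the orthonormality relations for $\{\varphi(\cdot-k)\}_{k\in\bz^d}$ as a statement about the Fourier coefficients of the $\bz^d$-periodic function $h_\varphi$.

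First I would record that $\widehat{\varphi(\cdot-k)}(x)=e^{-2\pi i k\cdot x}\hat\varphi(x)$ for $k\in\bz^d$, and then, by the Plancherel theorem,
\[
\langle \varphi(\cdot-k),\varphi(\cdot-l)\rangle_{L^2(\br^d)}=\int_{\br^d}|\hat\varphi(x)|^2\,e^{-2\pi i(k-l)\cdot x}\,dx ,
\]
so that everything reduces to computing these integrals for $m:=k-l\in\bz^d$. Next I would tile $\br^d$ by the unit cube $Q:=[0,1)^d$, writing $\br^d=\bigcup_{n\in\bz^d}(Q+n)$ as a disjoint union and substituting $x=y+n$; since $m\in\bz^d$ the factor $e^{-2\pi i m\cdot x}$ is invariant under $x\mapsto x+n$, whence
\[
\int_{\br^d}|\hat\varphi(x)|^2 e^{-2\pi i m\cdot x}\,dx=\int_Q\Bigl(\sum_{n\in\bz^d}|\hat\varphi(y+n)|^2\Bigr)e^{-2\pi i m\cdot y}\,dy=\int_Q h_\varphi(y)\,e^{-2\pi i m\cdot y}\,dy .
\]
The interchange of sum and integral is legitimate by Tonelli's theorem because the summands are nonnegative; the same computation with $m=0$ shows $h_\varphi\in L^1(Q)$ with $\int_Q h_\varphi=\|\varphi\|_2^2<\infty$, and $h_\varphi$ is $\bz^d$-periodic by construction. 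Thus $\langle\varphi(\cdot-k),\varphi(\cdot-l)\rangle$ is exactly the $(k-l)$-th Fourier coefficient of $h_\varphi$.

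To finish: the family $\{\varphi(\cdot-k)\}_{k\in\bz^d}$ is orthonormal iff this Fourier coefficient equals $\delta_{k,l}$ for all $k,l$, i.e.\ iff $h_\varphi$ and the constant function $1$ have the same Fourier coefficients; since $h_\varphi\in L^1(Q)$, the uniqueness theorem for Fourier coefficients on the torus forces $h_\varphi=1$ a.e., which is \eqref{eqorthosc}. The converse is immediate from the displayed identity. I would note in passing that the proposition as stated is really the \emph{orthonormality} condition (the "$=1$" encodes the normalization as well as pairwise orthogonality), so the asserted equivalence is with the full orthonormal-basis-of-$V_0$ hypothesis of Definition \ref{defmul}(v).

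The only steps needing care are (a) justifying the term-by-term integration, which is handled by nonnegativity and Tonelli, and (b) invoking uniqueness of Fourier coefficients, which needs $h_\varphi\in L^1(Q)$; both hold automatically here, so I do not expect a serious obstacle — the content of the argument is purely the periodization bookkeeping.
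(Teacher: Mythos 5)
Your argument is correct and is exactly the route the paper indicates (it gives no detailed proof, only the remark ``taking the Fourier transform and using a periodization''): Plancherel turns the inner products $\langle\varphi(\cdot-k),\varphi(\cdot-l)\rangle$ into Fourier coefficients of the periodization $h_\varphi$, and uniqueness of Fourier coefficients on the torus gives the equivalence with $h_\varphi\equiv 1$. Your parenthetical observation that ``$=1$'' really encodes orthonormality (matching the normalization in Definition \ref{defmul}(v)) is a fair reading of the paper's slightly loose wording, and the Tonelli justification is the only analytic point that needed care.
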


For the invariant measure $\mu_B$ to be a spectral measure with spectrum $\Lambda$, we need the functions $(e_\lambda)_{\lambda\in\Lambda}$ to be orthogonal in $L^2(\mu_B)$. Writing the Parseval equality for the function $e_{-x}$ in this orthonormal basis one obtains:
\begin{proposition}\label{proporthome}
The set $\Lambda$ is a spectrum for $\mu_B$ if and only if 
\begin{equation}\label{eqorthome}
h_{\Lambda}(x):=\sum_{\lambda\in\Lambda}|\hat\mu_B(x+\lambda)|^2=1\quad(x\in\br^d).
\end{equation}
\end{proposition}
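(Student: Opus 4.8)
The statement to prove, Proposition \ref{proporthome}, is the characterization that $\Lambda$ is a spectrum for $\mu_B$ if and only if $h_\Lambda(x) := \sum_{\lambda\in\Lambda}|\hat\mu_B(x+\lambda)|^2 = 1$ for all $x\in\br^d$. The plan is to express the inner products $\langle e_\lambda, e_{\lambda'}\rangle_{L^2(\mu_B)}$ directly in terms of the Fourier transform $\hat\mu_B$, and then to translate the two requirements in the definition of spectrum — orthonormality of $\{e_\lambda\}$ and completeness (totality) — into statements about the single function $h_\Lambda$.

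First I would record the elementary identity
\begin{equation*}
\langle e_\lambda, e_{\lambda'}\rangle_{L^2(\mu_B)} = \int_{\br^d} e^{2\pi i(\lambda-\lambda')\cdot t}\,d\mu_B(t) = \hat\mu_B(\lambda-\lambda').
\end{equation*}
In particular $\|e_\lambda\|^2 = \hat\mu_B(0) = \mu_B(\br^d) = 1$, so each $e_\lambda$ is automatically a unit vector, and orthonormality of $\{e_\lambda\,|\,\lambda\in\Lambda\}$ is equivalent to $\hat\mu_B(\lambda-\lambda')=0$ whenever $\lambda\neq\lambda'$ in $\Lambda$. Next, for a fixed $x\in\br^d$, expand the function $e_{-x}\in L^2(\mu_B)$: one always has Bessel's inequality
\begin{equation*}
\sum_{\lambda\in\Lambda}|\langle e_{-x}, e_\lambda\rangle_{L^2(\mu_B)}|^2 \leq \|e_{-x}\|_{L^2(\mu_B)}^2 = 1,
\end{equation*}
and since $\langle e_{-x},e_\lambda\rangle = \int e^{2\pi i(-x-\lambda)\cdot t}d\mu_B(t) = \hat\mu_B(-x-\lambda)$, and $|\hat\mu_B(-x-\lambda)| = |\hat\mu_B(x+\lambda)|$ after replacing $x$ by $-x$ (or using $\overline{\hat\mu_B(y)}=\hat\mu_B(-y)$ together with a change of variable), the left-hand side is exactly $h_\Lambda(x)$ up to this harmless sign change. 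Thus $h_\Lambda(x)\le 1$ always holds under the orthonormality assumption, and $h_\Lambda(x)=1$ for all $x$ is precisely Parseval's identity holding for every exponential $e_{-x}$.

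The remaining point is the standard density argument: the span of $\{e_{-x}\,|\,x\in\br^d\}$ (equivalently, of $\{e_\mu\,|\,\mu\in\br^d\}$) is dense in $L^2(\mu_B)$ because $\mu_B$ is a compactly supported (hence finite) Borel measure on $\br^d$, so exponentials separate points and an application of Stone–Weierstrass gives uniform density of their span in $C(\mathrm{supp}\,\mu_B)$, and then $L^2(\mu_B)$-density. Consequently, an orthonormal family in $L^2(\mu_B)$ is complete if and only if Parseval's identity holds on this dense set, i.e.\ if and only if $h_\Lambda(x)=1$ for all $x$. For the converse direction one notes that if $h_\Lambda\equiv 1$ then in particular, evaluating along a suitable sequence or using that $h_\Lambda(x-x')$ dominates $|\hat\mu_B(\lambda-\lambda')|^2$-type terms, orthonormality is forced — more cleanly, one shows directly that $h_\Lambda\equiv 1$ implies both Bessel is an equality (completeness) and that the family is orthonormal, since $h_\Lambda$ being the constant $1$ for all $x$, combined with the already-noted $\|e_\lambda\|=1$, pins down the Gram matrix to be the identity.

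The main obstacle, and the only step requiring genuine care rather than bookkeeping, is justifying the passage between "$h_\Lambda(x)=1$ for all $x$" and "$\{e_\lambda\}$ is both orthonormal and complete" in a single clean equivalence — i.e.\ making sure the two conditions in Definition \ref{defspectral} are not being conflated. The cleanest route is: (a) observe $\|e_\lambda\|=1$ unconditionally; (b) show $h_\Lambda(x)=\sum_\lambda|\langle e_{-x},e_\lambda\rangle|^2$ unconditionally (no orthogonality needed yet), so that $h_\Lambda\equiv 1$ is, by the density of exponentials plus the converse-Bessel (Parseval) criterion, equivalent to "$\{e_\lambda\}$ is a Parseval frame"; and (c) invoke the fact that a Parseval frame consisting of unit vectors is automatically an orthonormal basis. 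Step (c) is where one must be slightly careful, but it is a standard Hilbert-space fact (a unit-norm Parseval frame is an ONB), and citing or quickly proving it closes the argument.
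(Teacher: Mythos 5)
Your forward direction matches the paper: $|\langle e_{-x},e_\lambda\rangle_{L^2(\mu_B)}|=|\hat\mu_B(x+\lambda)|$ and $\|e_{-x}\|_{L^2(\mu_B)}=1$, so writing Parseval for $e_{-x}$ in the orthonormal basis $(e_\lambda)_{\lambda\in\Lambda}$ gives \eqref{eqorthome}, and for an orthonormal family Parseval on the (Stone--Weierstrass dense) exponentials is equivalent to completeness. The genuine gap is in the converse, and it sits exactly in the route you designate as cleanest. Your step (b) asserts that $h_\Lambda\equiv1$, i.e.\ the identity $\sum_{\lambda\in\Lambda}|\langle f,e_\lambda\rangle|^2=\|f\|^2$ holding for each individual $f=e_{-x}$, upgrades by ``density plus the converse-Bessel criterion'' to the Parseval frame property. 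That inference is invalid: the identity is quadratic in $f$, and the exponentials form a total set but not a subspace, so knowing it on them gives no control over linear combinations, and density cannot be invoked. (Toy counterexample in $\bc^2$: the family consisting of the vector $(1,0)$ taken twice satisfies the identity at $(1,1)$ and $(1,-1)$, which span, yet it is not a Parseval frame.) Consequently step (c) --- that a unit-norm Parseval frame is an ONB, which is indeed standard --- never gets off the ground; you flag (c) as the delicate point, but (b) is where the argument breaks.

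The repair is precisely the paper's argument, which you gesture at in your ``dominates'' aside and then set aside: evaluate \eqref{eqorthome} at $x=-\lambda'$ for $\lambda'\in\Lambda$. The term $\lambda=\lambda'$ already contributes $|\hat\mu_B(0)|^2=1$, so every other term vanishes, i.e.\ $\hat\mu_B(\lambda-\lambda')=0$ for $\lambda\neq\lambda'$; together with $\|e_\lambda\|=1$ this extracts orthonormality directly from $h_\Lambda\equiv1$. Then for arbitrary $x$, the equality $h_\Lambda(x)=1$ is Bessel equality for $e_{-x}$ with respect to an orthonormal family, which forces $e_{-x}\in\Span\{e_\lambda\,|\,\lambda\in\Lambda\}$; since Stone--Weierstrass makes the exponentials total in $L^2(\mu_B)$, the family $(e_\lambda)_{\lambda\in\Lambda}$ is complete, hence an orthonormal basis and $\Lambda$ is a spectrum. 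With that substitution for your step (b), the rest of your proposal is correct and coincides with the paper's proof.
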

\begin{proof}
The converse is also true, because, taking $x=-\lambda'$ for some $\lambda'\in\Lambda$, the relation \eqref{eqorthome} implies that $e_{\lambda'}$ is orthogonal to all the other $e_\lambda$. And since $e_{-x}$ is in the span of $(e_\lambda)_{\lambda\in\Lambda}$, the Stone-Weierstrass implies that this family spans the entire space $L^2(\mu_B)$.
\end{proof}

\subsection{The quadrature mirror filter condition and Hadamard triples}\label{qmf}
We saw that the scaling function $\varphi$ can be obtained from the low-pass filter $m_0$ by an infinite product formula \eqref{eqinfprodsc}. But we want the translates of the scaling function to be orthogonal. Combining the orthogonality condition \eqref{eqorthosc} with the scaling equation \eqref{eqscalingf}, we see that $m_0$ must satisfy the {\it quadrature mirror filter (QMF)} condition:
\begin{equation}\label{eqqmf}
\sum_{l\in\mathcal L}|m_0((A^T)^{-1}(x+l)|^2=1,\quad(x\in\br^d),
\end{equation}
where $\mathcal L$ is a complete set of representatives for $\bz^d/A^T\bz^d$.

For the moment it is not clear what a QMF condition should be for the fractal measure $\mu_B$. One of the reasons is that we do not have a candidate for the spectrum $\Lambda$ yet. However, we will see that the crucial notion of Hadamard triples introduced by Jorgensen and Pedersen in \cite{JoPe98} can be interpreted as a QMF condition.

\begin{definition}\label{defhada}
Let $A$ be a $d\times d$ integer matrix. Let $B,L$ be two finite subsets of $\bz^d$ of the same cardinality $|B|=|L|=:N$. Then 
$(A,B,L)$ is called a {\it Hadamard triple} if the matrix
\begin{equation}\label{eqhada}
\frac{1}{\sqrt{N}}\left(e^{2\pi i A^{-1}b\cdot l}\right)_{b\in B,l\in L}
\end{equation}
is unitary. (This is called a complex Hadamard matrix.)
\end{definition}

\begin{remark}
The unitarity condition in \eqref{eqhada} may be understood as follows; $d=1$. Suppose $B=\{0,b_1,\dots,b_{N-1}\}$. Then unitarity in \eqref{eqhada} holds for some $L\subset\br$ iff the complex numbers $$\{e^{2\pi i (A^T)^{-1}(l_i-l_k)}\}_{l_i\neq l_k\in L}\subset\bt$$ are roots of the polynomial $1+z^{b_1}+z^{b_2}+\dots+z^{b_{N-1}}$. 

To illustrate the restriction placed on the given pair $(A,B)$, take the example (see Remark \ref{rem4_2}) when $B=\{0,2,3\}$. In that case the associated polynomial equation $1+z^2+z^3=0$ has no solutions with $|z|=1$, and so there is no set $L\subset\br$ which produces a complex Hadamard matrix.

In the case of the middle-third-Cantor example, $\tau_0(x)=x/3$ and $\tau_2(x)=(x+2)/3$; so $A=3$, and $B=\{0,2\}$. Up to a translation in $\br$, the possibilities for the set $L$ are $L=\{0,\frac34(n+\frac12)\}$, for some $n\in\bz$. Since none of these solutions $l=\frac34(n+\frac12)$ are in $\bz$, the unitarity condition \eqref{eqhada} is not satisfied.
\end{remark}
\begin{example}\label{ex4}
We will not use the example of the middle third Cantor set. It was proved in \cite{JoPe98} that this fractal measure does not admit more than $2$ mutually orthogonal exponential functions. However, in \cite{JoPe98}, it was shown that a modification of this does provide an example of a fractal spectral measure. Take $A=4$, $B=\{0,2\}$, so $\tau_0x=x/4$, $\tau_2x=(x+2)/4$. The attractor of this iterated function system is the Cantor set obtained by dividing the unit interval in four equal pieces and keeping the first and the third piece, and iterating this process.

Then one can pick $L=\{0,1\}$ to obtain the Hadamard triple $(A,B,L)$. The unitary matrix in \eqref{eqhada} is $\frac{1}{\sqrt2}\left(\begin{array}{cc}1&1\\ 1&-1\end{array}\right)$. Based on this Jorgensen and Pedersen proved in \cite{JoPe98} that the measure $\mu_B$ has spectrum 
$$\Lambda=\left\{\sum_{k=0}^n4^kl_k\,|\,l_k\in L,n\in\bn\right\}.$$
This was the first example of a (non-atomic) spectral measure which is singular with respect to the Lebesgue measure.
\end{example}

From now on we will make the following:
\begin{assumption}\label{ass}
 There is set $L\subset\bz^d$ of the same cardinality as $B$, $|B|=|L|=:N$ such that $(A,B,L)$ is a Hadamard triple. Moreover we will assume that $0\in B$ and $0\in L$.
\end{assumption}

\begin{proposition}\label{prophada}\cite{LaWa02}
Let $A$ be a $d\times d$ expansive integer matrix, $B,L\subset\bz^d$ with $|B|=|L|=N$. 
Then $(A,B,L)$ is a Hadamard triple if and only if 
\begin{equation}\label{eqqmf2}
\sum_{l\in L}|m_B((A^T)^{-1}(x+l)|^2=1,\quad(x\in\br^d)
\end{equation}
where $m_B$ is the function defined in \eqref{eqmb}.
\end{proposition}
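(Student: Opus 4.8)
The plan is to recognize that the QMF identity \eqref{eqqmf2} is, after expanding the square, nothing but the orthonormality of the rows of the matrix in \eqref{eqhada}, so that the whole proposition reduces to bookkeeping with the difference set of $B$. Since $A$ is expansive it is invertible, and for $b\in\bz^d$, $y\in\br^d$ one has $b\cdot(A^T)^{-1}y=(A^{-1}b)\cdot y$; hence by \eqref{eqmb},
\[
m_B((A^T)^{-1}(x+l))=\frac1N\sum_{b\in B}e^{2\pi i A^{-1}b\cdot x}\,e^{2\pi i A^{-1}b\cdot l}.
\]
First I would square this and sum over $l\in L$ to obtain
\[
\sum_{l\in L}\bigl|m_B((A^T)^{-1}(x+l))\bigr|^2=\frac1{N^2}\sum_{b,b'\in B}c_{b-b'}\,e^{2\pi i A^{-1}(b-b')\cdot x},\qquad c_\delta:=\sum_{l\in L}e^{2\pi i A^{-1}\delta\cdot l},
\]
and then regroup the double sum according to the value $\delta=b-b'$ in the difference set $D:=\{b-b':b,b'\in B\}$, so the right-hand side becomes $\frac1{N^2}\sum_{\delta\in D}|S_\delta|\,c_\delta\,e^{2\pi i A^{-1}\delta\cdot x}$ with $S_\delta:=\{(b,b')\in B\times B:b-b'=\delta\}$; note $c_\delta$ depends only on $\delta$, and $|S_0|=N=c_0$.

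For the ``only if'' direction, the Hadamard condition \eqref{eqhada} says precisely that the rows of $\frac1{\sqrt N}(e^{2\pi i A^{-1}b\cdot l})$ are orthonormal, i.e. $c_\delta=N$ for $\delta=0$ and $c_\delta=0$ otherwise; substituting into the grouped sum collapses it to the constant $\frac1{N^2}\cdot N\cdot N=1$. For the ``if'' direction I would use linear independence of characters: because $A^{-1}$ is injective, the frequencies $A^{-1}\delta$, $\delta\in D$, are pairwise distinct, so the functions $x\mapsto e^{2\pi i A^{-1}\delta\cdot x}$ are linearly independent on $\br^d$ (seen by averaging against a conjugate character over large cubes). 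Matching coefficients in $\frac1{N^2}\sum_{\delta\in D}|S_\delta|c_\delta e^{2\pi i A^{-1}\delta\cdot x}\equiv 1=e^{2\pi i A^{-1}\cdot0\cdot x}$ then forces $|S_\delta|c_\delta=0$, hence (each $S_\delta\ne\emptyset$) $c_\delta=0$, for every $\delta\in D\setminus\{0\}$, while $c_0=|L|=N$ holds automatically. This is exactly $(UU^*)_{b,b'}=\delta_{b,b'}$ for $U$ the matrix in \eqref{eqhada}, and since $|B|=|L|$ makes $U$ square, $UU^*=I$ gives that $U$ is unitary, i.e. $(A,B,L)$ is a Hadamard triple.

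The computations are routine; the one step worth stating with care is the uniqueness of the coefficients of an exponential sum with pairwise distinct frequencies, which is exactly where expansiveness (hence invertibility) of $A$ enters — this is the ``main obstacle,'' though a mild one. Everything else is the grouping by the difference set and the free equivalence ``$UU^*=I$ iff $U$ unitary'' for square matrices.
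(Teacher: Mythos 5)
Your proof is correct and is essentially the computation the paper has in mind: the paper omits the argument entirely (``requires just a simple calculation''), and expanding $\bigl|m_B((A^T)^{-1}(x+l))\bigr|^2$, summing over $l\in L$, grouping by the differences $\delta=b-b'$, and reading the Hadamard condition as row-orthonormality $\sum_{l\in L}e^{2\pi i A^{-1}(b-b')\cdot l}=N\delta_{b,b'}$ (with linear independence of the distinct characters $e^{2\pi i A^{-1}\delta\cdot x}$ for the converse) is exactly that calculation. The only cosmetic remark is that the step you single out uses nothing more than invertibility of $A$, which expansiveness of course supplies.
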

The proof of the proposition requires just a simple calculation.

It is clear that we have now a QMF condition for our fractal setting. 
\begin{definition}\label{defdualifs}
We say that the iterated function system 
$$\sigma_l(x)=(A^T)^{-1}(x+l),\quad(x\in \br^d,l\in L)$$
is {\it dual} to the IFS $(\tau_b)_{b\in B}$ if $(A,B,L)$ is a Hadamard triple.
\end{definition}

With the Hadamard triple we have a first candidate for a spectrum of $\mu_B$:
\begin{equation}\label{eqlambda0}
\Lambda_0:=\left\{\sum_{k=0}^n(A^T)^kl_k\,|\,l_k\in L,n\in\bn\right\}.
\end{equation}

\subsection{Lawton's theorem and transfer operators}\label{lawton}

 In this section, we study a certain operator which is an infinite-dimensional version of a Perron-Frobenius matrix, i.e., a matrix with positive entries. The Perron-Frobenius theorem applies to such matrices $T$, and it states that when $T$ is given, the spectral radius $r = r(T)$ is a distinguished positive eigenvalue of $T$. The corresponding eigenspace contains a positive vector. The peripheral spectrum lies on the circle centered at $0$ in the complex plane with radius $r(T)$. There is an extensive theory involving a certain infinite-dimensional positive operator, which generalizes Perron-Frobenius in a manner which extends the Perron-Frobenius theorem from matrix theory. It is discussed in \cite{BrJo02}, and the reader is referred to this book for details and for additional references.

        Indeed, it turns out that there is an analogous Perron-Frobenius operator in wavelet and fractal theory. We call it ``the transfer operator'', or ``the Perron-Frobenius-Ruelle operator'', but other names may reasonably be associated with it, see e.g., \cite{BrJo02}.

        In the context of wavelets, the operator was identified in a pioneering paper \cite{Law91} by Wayne Lawton. In this wavelet context, the transfer operator $T$ is specified by a chosen filter function $m_0$, and we denote the associated operator $T = R_{m_0}$.

       It is convenient to normalize such that the spectral radius of  $R_{m_0}$ is $1$, and we will do this in our present paper.

       We saw that the scaling relations for wavelets (the equation \eqref{eqscaling} and for fractals (the equations \eqref{eqattr}, \eqref{eqinvmea}, and \eqref{eqinvmeab} ) involve an identity which in a certain sense is of first order. However orthogonality relations involve ``correlations'', and so will be of second order. But it turns out that the quantities that ``measure'' correlations, or inner product numbers for candidates of wavelet functions, or for fractal components, may be understood as eigenfunctions in a certain positive operator, which we call the Ruelle operator, or simply the transfer operator. In its present form, we give it in Definition \ref{deftransfer} below. The functions we are studying in Propositions \ref{proporthosc} and \ref{proporthome} will be eigenfunction for appropriate transfer operators.

       The best orthogonality relations in the analysis of wavelets and of fractals will come about from testing the leading eigenspace of the transfer operator; i.e., the eigenspace of the top eigenvalue in the spectrum of our transfer operator, as we make precise in Propositions \ref{prophphi} and \ref{thdutjor1} below.\medskip

Suppose $m_0$ is a Lipschitz low-pass filter ($m_0(0)=1$) which satisfies the QMF condition \eqref{eqqmf}. Then one can construct the scaling function $\varphi$ via the infinite product formula \eqref{eqinfprodsc}. Moreover, $\hat\varphi$ defined by this infinite product is a function $L^2(\br^d)$ (\cite{Mal89,Jor06, HeWe96}), and if $m_0$ is a trigonometric polynomial, i.e. only a finite number of the coefficients $a_k$ are nonzero, the infinite product will give us a compactly supported $L^2(\br^d)$-function (\cite[Theorem 3.13]{HeWe96}). However the QMF condition is only necessary, it is not sufficient to guarantee the orthogonality of the translates of the scaling function. More is needed. The extra ingredient was provided by Lawton in \cite{Law91}.

\begin{theorem}\label{thlawton}\textup{(Lawton \cite{Law91})}
Let $m_0$ be a Lipschitz low-pass filter with $m_0(0)=1$ and satisfying the QMF condition \eqref{eqqmf}. Then the scaling function $\varphi$ defined by \eqref{eqinfprodsc} is an orthonormal scaling function (i.e., its translates are orthonormal) if and only if the only continuous $\bz^d$-periodic functions $h$ that satisfy
\begin{equation}\label{eqlawton}
\sum_{l\in\mathcal L}|m_0((A^T)^{-1}(x+l)|^2h((A^T)^{-1}(x+l))=h(x),\quad(x\in\br^d)
\end{equation}
are the constants. (Recall that $\mathcal L$ is a complete set of representatives for $\bz^d/A^T\bz^d$.)
\end{theorem}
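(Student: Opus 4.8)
The plan is to prove Lawton's theorem by the standard argument that relates the eigenvalue-$1$ eigenspace of the transfer operator $R_{m_0}$ to the periodization function $h_\varphi$ from Proposition~\ref{proporthosc}. First I would introduce the transfer (Ruelle) operator acting on continuous $\bz^d$-periodic functions by $(R_{m_0}h)(x)=\sum_{l\in\mathcal L}|m_0((A^T)^{-1}(x+l))|^2 h((A^T)^{-1}(x+l))$, so that equation~\eqref{eqlawton} says exactly that $h$ is a fixed point of $R_{m_0}$. The QMF condition~\eqref{eqqmf} is precisely the statement $R_{m_0}\mathbf 1=\mathbf 1$, so the constants always lie in the eigenspace; the theorem asserts that orthonormality of the translates is equivalent to this eigenspace being one-dimensional.

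The key computational step is to show that $h_\varphi(x)=\sum_{k\in\bz^d}|\hat\varphi(x+k)|^2$ is itself a fixed point of $R_{m_0}$. This follows by substituting the scaling relation~\eqref{eqscalingf}, $\hat\varphi(A^Tx)=m_0((A^T)^{-1}\cdot A^Tx)\,\hat\varphi(x)$ — more usefully written $\hat\varphi(y)=m_0((A^T)^{-1}y)\hat\varphi((A^T)^{-1}y)$ — into the periodization sum, splitting the lattice $\bz^d$ into cosets of $A^T\bz^d$ indexed by $\mathcal L$, and using $\bz^d$-periodicity of $m_0$. One must first know $h_\varphi$ is finite and continuous: since $m_0$ is Lipschitz with $m_0(0)=1$ and satisfies QMF, the infinite product~\eqref{eqinfprodsc} defines $\hat\varphi\in L^2(\br^d)$ (cited from \cite{HeWe96,Mal89}), and a uniform-tail estimate on $\sum_k|\hat\varphi(x+k)|^2$ coming from the Lipschitz/QMF bounds gives continuity of $h_\varphi$. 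For the forward direction: if the translates are orthonormal then $h_\varphi\equiv 1$ by Proposition~\ref{proporthosc}; if $h$ is any continuous periodic fixed point, one argues $h$ must be constant by comparing it with $h_\varphi$ — a standard trick is to observe that iterating $R_{m_0}$ on a general $h$ forces convergence toward a multiple of the leading eigenfunction, and since $h_\varphi\equiv1$, the eigenspace collapses to the constants. For the converse: if the only fixed points are constants, one shows $h_\varphi$ (which is a nonnegative continuous fixed point) must equal its average, and a separate normalization argument — using $\hat\varphi(0)=\prod m_0(0)=1$ and that $\varphi\in V_0\subset L^2$ with $\int h_\varphi$ computing $\|\varphi\|^2$ against a single translate — pins the constant to $1$, whence orthonormality by Proposition~\ref{proporthosc} again.

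The main obstacle I expect is the forward implication's ``rigidity'' step: showing that a continuous periodic solution $h$ of $R_{m_0}h=h$ is forced to be constant when $h_\varphi\equiv1$. This is where Lawton's insight really lies, and it is not a soft argument — one typically writes $h$ in terms of its Fourier coefficients and analyzes how $R_{m_0}$ acts on them, or alternatively one uses the fact that $R_{m_0}$ is a positive operator whose peripheral spectrum on $C(\bt^d)$ is controlled, so that a continuous eigenfunction for eigenvalue $1$ must be proportional to the strictly positive fixed point, which here is $\mathbf 1$. Establishing that $\mathbf 1$ is the unique (up to scalar) continuous fixed point given orthonormality requires either invoking ergodicity of the associated shift/measure-theoretic dynamics or a direct argument that $|\hat\varphi(x+k)|^2$-weights cannot support a nonconstant invariant function — this is the technical heart and I would expect to lean on the positivity/Perron–Frobenius structure of $R_{m_0}$ emphasized in the preceding subsection, together with the cited results from \cite{BrJo02,Law91}. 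The remaining steps — finiteness and continuity of $h_\varphi$, the coset-splitting computation showing $R_{m_0}h_\varphi=h_\varphi$, and the normalization $h_\varphi\equiv1\Leftrightarrow$ orthonormality — are routine given the machinery already assembled in the excerpt.
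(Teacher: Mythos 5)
Your skeleton is the same as the paper's: show that $h_\varphi$ from \eqref{eqorthosc} is a continuous fixed point of the transfer operator, get the implication ``only constant solutions of \eqref{eqlawton} $\Rightarrow$ orthonormality'' from the normalization $h_\varphi(0)=1$, and isolate the converse as the hard step. The genuine gap is in the mechanism you offer for that converse. Claiming that iterating $R_{m_0}$ on a general $h$ forces convergence to a multiple of ``the leading eigenfunction,'' or that positivity/Perron--Frobenius control of the peripheral spectrum forces any continuous eigenfunction at eigenvalue $1$ to be proportional to $\mathbf 1$, is circular: one-dimensionality of that eigenspace is exactly the property the theorem characterizes, and it genuinely fails for QMF filters in general. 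The stretched Haar filter $m_0(x)=\frac12(1+e^{2\pi i 3x})$ of Example \ref{exstretched} is Lipschitz, satisfies \eqref{eqqmf} and $m_0(0)=1$, yet its transfer operator has non-constant continuous fixed points (attached to the cycle $\{\frac13,\frac23\}$); so no generic positivity, spectral-gap, or ergodicity argument can close this step --- the hypothesis $h_\varphi\equiv 1$ must enter in a concrete way.

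The missing idea, which is the paper's key lemma, is a minimality property of $h_\varphi$: it is the smallest non-negative continuous solution of \eqref{eqlawton} normalized by $h(0)=1$ (see \cite[Section 5.1]{BrJo02}, \cite{Dut04}; this is where Lawton's insight is packaged). Granting it, the converse is a short sandwich argument: orthonormality gives $h_\varphi\equiv1$ by Proposition \ref{proporthosc}; for any real continuous fixed point $h$, both $h-\min h$ and $\max h-h$ are non-negative continuous fixed points, and applying minimality of $h_\varphi\equiv1$ to their normalizations yields $h\geq h(0)$ and $h\leq h(0)$, so $h$ is constant (complex $h$ splits into real and imaginary parts, which $R_{m_0}$ preserves). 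Your ``direct argument that the $|\hat\varphi(x+k)|^2$-weights cannot support a nonconstant invariant function'' is the right instinct but remains unsubstantiated without this lemma or Lawton's original correlation-coefficient computation in \cite{Law91}. A smaller point: in the easy direction, pinning the constant via $\int h_\varphi=\|\varphi\|^2$ is circular, since $\|\varphi\|=1$ is not yet known; the clean normalization is $h_\varphi(0)=1$, which holds because the QMF condition, $\bz^d$-periodicity, and $m_0(0)=1$ force $\hat\varphi(k)=0$ for all $k\in\bz^d\setminus\{0\}$.
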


\begin{proof}
The main observation needed here is that the function $h_\varphi$ of \eqref{eqorthosc} satisfies the equation \eqref{eqlawton}. It can be proved continuous, since $m_0$ is Lipschitz. So if the only solutions to \eqref{eqlawton} are the constants, then $h_\varphi=1$ because $h_\varphi(0)=1$. The proof of the converse statement requires a minimality property of $h_\varphi$: it is the smallest non-negative continuous solution to \eqref{eqlawton} which satisfies $h_\varphi(0)=1$ (see \cite[Section 5.1]{BrJo02}, \cite{Dut04}). If it is equal to $1$, then there can be no other such functions.
\end{proof}

Lawton's theorem brings a new character into play: the transfer operator.
\begin{definition}\label{deftransfer}
The operator defined on $\bz^d$-periodic functions $f$ on $\br^d$ by
\begin{equation}\label{eqtransfer}
(R_{m_0}f)(x)=\sum_{l\in \mathcal L}|m_0((A^T)^{-1}(x+l))|^2f((A^T)^{-1}(x+l)),\quad(x\in\br^d)
\end{equation}
is called the {\it transfer operator}.

In the fractal setting, one uses the function $m_B$ for the ``weight'' in place of $m_0$, and the dual IFS $(\sigma_l)_{l\in L}$ to replace the inverse branches $x\mapsto (A^T)^{-1}(x+l)$. Thus the transfer operator is defined by
\begin{equation}\label{eqtransfer2}
(R_{m_B}f)(x)=\sum_{l\in L}|m_B(\sigma_l(x))|^2f(\sigma_l(x)),\quad(x\in\br^d).
\end{equation}
\end{definition}

\begin{proposition}\label{prophphi}The following affirmations hold:
\begin{enumerate}
\item In the wavelet context, with $h_\varphi$ defined in \eqref{eqorthosc}, one has $R_{m_0}h_\varphi=h_\varphi$.
\item In the fractal context, if $\Lambda_0$ is the candidate for a spectrum, defined in \eqref{eqlambda0}, and $h_{\Lambda_0}$ is the function defined in \eqref{eqorthome}, one has $R_{m_B}h_{\Lambda_0}=h_{\Lambda_0}$.
\end{enumerate}

\end{proposition}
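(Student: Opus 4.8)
The plan is to verify both identities by direct substitution, exploiting the functional equations for $\hat\varphi$ and $\hat\mu_B$ together with the QMF conditions. Both parts have the same structure, so I will describe part (i) and indicate how (ii) follows by the obvious dictionary (replace $m_0$ by $m_B$, the inverse branches $x\mapsto(A^T)^{-1}(x+l)$ by the dual maps $\sigma_l$, $\mathcal L$ by $L$, and $\hat\varphi$ by $\hat\mu_B$). The key input for (i) is the scaling equation \eqref{eqscalingf}, $\hat\varphi(A^Tx)=m_0((A^T)^{-1}\cdot A^Tx)\,\hat\varphi(x)$, equivalently $\hat\varphi(y)=m_0((A^T)^{-1}y)\hat\varphi((A^T)^{-1}y)$; the key input for (ii) is the analogous relation \eqref{eqinvariancef}.

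First I would start from the definition \eqref{eqtransfer} of $R_{m_0}$ applied to $h_\varphi$ and substitute \eqref{eqorthosc}:
\begin{equation}\label{eqpropproof1}
(R_{m_0}h_\varphi)(x)=\sum_{l\in\mathcal L}|m_0((A^T)^{-1}(x+l))|^2\sum_{k\in\bz^d}|\hat\varphi((A^T)^{-1}(x+l)+k)|^2.
\end{equation}
Next I would pull the filter factor inside the inner sum and use the scaling relation \eqref{eqscalingf} in the form $m_0((A^T)^{-1}y)\hat\varphi((A^T)^{-1}y)=\hat\varphi(y)$ with $y=x+l+A^Tk$; since $m_0$ is $\bz^d$-periodic, $m_0((A^T)^{-1}(x+l))=m_0((A^T)^{-1}(x+l)+k)=m_0((A^T)^{-1}(x+l+A^Tk))$, so each term becomes $|\hat\varphi(x+l+A^Tk)|^2$. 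Thus \eqref{eqpropproof1} collapses to
\begin{equation}\label{eqpropproof2}
(R_{m_0}h_\varphi)(x)=\sum_{l\in\mathcal L}\sum_{k\in\bz^d}|\hat\varphi(x+l+A^Tk)|^2.
\end{equation}
The final step is the counting/bijection argument: since $\mathcal L$ is a complete set of representatives for $\bz^d/A^T\bz^d$, the map $(l,k)\mapsto l+A^Tk$ is a bijection from $\mathcal L\times\bz^d$ onto $\bz^d$, so the double sum in \eqref{eqpropproof2} is exactly $\sum_{m\in\bz^d}|\hat\varphi(x+m)|^2=h_\varphi(x)$, which is \eqref{eqorthosc}. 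This proves (i).

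For (ii) the same three steps work verbatim with $h_{\Lambda_0}(x)=\sum_{\lambda\in\Lambda_0}|\hat\mu_B(x+\lambda)|^2$: substitute \eqref{eqtransfer2}, use \eqref{eqinvariancef} (and $\bz^d$-periodicity of $m_B$) to absorb the weight, and then observe that the self-similar structure $\Lambda_0=\bigcup_{l\in L}(l+A^T\Lambda_0)$ — which follows directly from the definition \eqref{eqlambda0}, with the union essentially disjoint by the Hadamard condition — plays the role that the bijection $(l,k)\mapsto l+A^Tk$ played above, repackaging $\sum_{l\in L}\sum_{\lambda\in\Lambda_0}|\hat\mu_B(\sigma_l(x)+\lambda+\text{(shift)})|^2$ back into $\sum_{\lambda\in\Lambda_0}|\hat\mu_B(x+\lambda)|^2$. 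The one point that deserves care, and which I expect to be the main (minor) obstacle, is the bookkeeping in this last step for part (ii): one must check that applying $\sigma_l(x)=(A^T)^{-1}(x+l)$ inside $\hat\mu_B$ and then using \eqref{eqinvariancef} produces arguments of the form $x+(l+A^T\lambda)$ with $l+A^T\lambda$ ranging bijectively over $\Lambda_0$, so that no terms are lost or double-counted; this is where the assumption $0\in L$ and the precise recursive definition \eqref{eqlambda0} of $\Lambda_0$ get used. Everything else is routine manipulation of absolutely convergent sums (justified since $\hat\mu_B$ decays on the relevant lattice because $\{e_\lambda\}_{\lambda\in\Lambda_0}$ is orthonormal, so $h_{\Lambda_0}\le 1$).
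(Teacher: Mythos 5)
Your proof is correct and follows essentially the same route the paper takes: the paper's proof of (ii) is only a sketch deferring the computation to \cite{DuJo06b}, but it singles out exactly the ingredient you use, namely the disjoint decomposition $\Lambda_0=\bigcup_{l\in L}(l+A^T\Lambda_0)$ coming from the Hadamard condition, combined with the refinement equations \eqref{eqscalingf} and \eqref{eqinvariancef} and the periodicity of the filters. Your write-up simply carries out in full the substitution-and-reindexing computation that the paper leaves to the reference (and your reading of \eqref{eqscalingf} in the form $\hat\varphi(y)=m_0((A^T)^{-1}y)\hat\varphi((A^T)^{-1}y)$ is the intended one).
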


\begin{proof}
Since we have already talked about the wavelet context (i), we only have to see what happens in (ii). The ideas are the same as in the wavelet context. One has to perform a computation. The important property needed here is that $\Lambda_0=\cup_{l\in L}(l+A^T\Lambda_0)$, and the union is disjoint (a consequence of the Hadamard property \eqref{eqhada}). For details we refer to \cite{DuJo06b}.
\end{proof}

\begin{theorem}\label{thdutjor1}
The set $\Lambda_0$ in \eqref{eqlambda0} is a spectrum for $\mu_B$ if and only if the only continuous eigenfunctions of the transfer operator $R_{m_B}$ with eigenvalue $1$ are the constants.
\end{theorem}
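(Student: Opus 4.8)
The plan is to run the fractal analogue of the proof of Lawton's theorem (Theorem~\ref{thlawton}), with $h_\varphi$ replaced by $h_{\Lambda_0}$, the filter $m_0$ replaced by $m_B$, and the inverse branches replaced by the dual maps $\sigma_l$. The first move is a reduction: by Proposition~\ref{proporthome}, the statement ``$\Lambda_0$ is a spectrum for $\mu_B$'' is equivalent to ``$h_{\Lambda_0}\equiv 1$''. So it suffices to prove that $h_{\Lambda_0}\equiv 1$ holds if and only if the constant functions exhaust the continuous eigenfunctions of $R_{m_B}$ for eigenvalue $1$.

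For this I would first assemble three facts about $h_{\Lambda_0}$. First, $h_{\Lambda_0}$ is continuous: since $m_B$ is a trigonometric polynomial with $m_B(0)=1$, the product formula \eqref{eqinfprodme} yields enough decay of $\hat\mu_B$ for the series $\sum_{\lambda\in\Lambda_0}|\hat\mu_B(x+\lambda)|^2$ to converge uniformly, the disjoint decomposition $\Lambda_0=\bigcup_{l\in L}(l+A^T\Lambda_0)$ being used to control the partial sums. Second, $R_{m_B}h_{\Lambda_0}=h_{\Lambda_0}$: this is precisely Proposition~\ref{prophphi}(ii). Third, $h_{\Lambda_0}(x)\le 1$ for all $x$ while $h_{\Lambda_0}(0)\ge 1$: the family $\{e_\lambda\}_{\lambda\in\Lambda_0}$ is automatically orthonormal in $L^2(\mu_B)$ once $(A,B,L)$ is a Hadamard triple (a standard consequence of \eqref{eqhada}, cf.\ \cite{JoPe98}; in particular $\hat\mu_B(\lambda)=0$ for $0\ne\lambda\in\Lambda_0$, which one can also check by hand by peeling digits off $\lambda$ via \eqref{eqinvariancef}, the $\bz^d$-periodicity of $m_B$, $m_B(0)=1$, and the vanishing of $m_B$ on the nonzero cosets forced by unitarity of \eqref{eqhada}), so $h_{\Lambda_0}\le 1$ by Bessel's inequality applied to $e_{-x}$, while the $\lambda=0$ summand already gives $h_{\Lambda_0}(0)\ge|\hat\mu_B(0)|^2=1$.

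The fourth and decisive input is the \emph{minimality} of $h_{\Lambda_0}$: among all non-negative continuous functions $g$ with $R_{m_B}g=g$ and $g(0)=1$, the function $h_{\Lambda_0}$ is the smallest; equivalently, $h_{\Lambda_0}$ is obtained as a monotone/dominated pointwise limit of the iterates $R_{m_B}^n\mathbf{1}$. This is the exact counterpart of the hard direction of Lawton's theorem, and I would cite \cite{BrJo02} (Section~5.1), \cite{Dut04}, or \cite{DuJo06b} for it rather than reprove it. With these inputs the two implications are short. If the only continuous eigenfunctions for eigenvalue $1$ are constants, then by the first two facts $h_{\Lambda_0}$ is such an eigenfunction, hence a constant $c$; by the third fact $c\le 1$ and $c=h_{\Lambda_0}(0)\ge 1$, so $c=1$, and Proposition~\ref{proporthome} gives that $\Lambda_0$ is a spectrum. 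Conversely, if $\Lambda_0$ is a spectrum then $h_{\Lambda_0}\equiv 1$, so by minimality the constant $1$ is the smallest non-negative continuous fixed point of $R_{m_B}$ taking value $1$ at the origin; given any continuous eigenfunction $h$ for eigenvalue $1$, reduce to $h$ real (real and imaginary parts are again such eigenfunctions, since $R_{m_B}$ has real non-negative coefficients), note that by the QMF identity $\sum_{l\in L}|m_B(\sigma_l(x))|^2=1$ of Proposition~\ref{prophada} every constant is fixed by $R_{m_B}$, pick $t>0$ so large that $g:=t+h>0$, and observe that $g/g(0)$ is a non-negative continuous fixed point with value $1$ at $0$; minimality gives $g/g(0)\ge 1$, i.e.\ $h(x)\ge h(0)$ for all $x$, and applying the same to $-h$ gives $h(x)\le h(0)$, so $h$ is constant.

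The step I expect to be the main obstacle is the minimality characterization of $h_{\Lambda_0}$ among non-negative continuous fixed points of $R_{m_B}$ — it is the place where the Perron--Frobenius--Ruelle machinery for the transfer operator genuinely enters, exactly as in the proof of Theorem~\ref{thlawton}. A secondary technical point, easy to overlook, is the continuity of $h_{\Lambda_0}$: it rests on quantitative decay of $\hat\mu_B$, and although $m_B$ being a trigonometric polynomial makes this routine, it does need the disjointness in $\Lambda_0=\bigcup_{l\in L}(l+A^T\Lambda_0)$ to be exploited carefully.
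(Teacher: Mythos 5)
Your proposal is correct and follows essentially the same route as the paper, which proves Theorem \ref{thdutjor1} by simply transplanting the proof of Lawton's theorem (Theorem \ref{thlawton}) to the fractal setting and deferring the details to \cite{DuJo06b}: the fixed-point property $R_{m_B}h_{\Lambda_0}=h_{\Lambda_0}$, continuity, the normalization at $0$, and the minimality of $h_{\Lambda_0}$ among non-negative continuous eigenfunctions (cited from \cite{BrJo02,Dut04,DuJo06b}) are exactly the ingredients intended there. Your write-up merely spells out the easy implications (Bessel bound $h_{\Lambda_0}\le 1$, the shift-and-normalize trick in the converse) that the paper leaves implicit, so there is nothing to correct.
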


\begin{proof}
The main ideas are the same as in the proof of Theorem \ref{thlawton}. While we did not formulate this theorem in \cite{DuJo06b}, all the required details are contained there.
\end{proof}

\subsection{Cohen's orthogonality condition and the \L aba-Wang theorem}\label{cohen}
A different necessary and sufficient condition for the orthogonality of the translates of the scaling function is captured in {\it Cohen's condition}. Before stating it, we need a definition.

\begin{definition}\label{defcycle}
Let $m_0$ be a low-pass filter satisfying the quadrature mirror filter condition \eqref{eqqmf}. A finite set $C:=\{x_1,\dots,x_{p-1}\}$ in $[0,1)^d$ is called a {\it cycle} if $Ax_i\equiv x_{i+1}\mod\bz^d$ for $i\in\{1,\dots,p-1\}$ and $Ax_p\equiv x_0\mod\bz^d$. The cycle $C$ is called an {\it $m_0$-cycle} if $|m_0(x_i)|=1$ for all $i\in\{1,\dots,p\}$.
\end{definition}

\begin{theorem}\label{thcohen}\textup{(Cohen \cite{Coh90,Coh90a})} 
Let $m_0$ be a Lipschitz low-pass filter satisfying the QMF condition \eqref{eqqmf}. Assume in addition that $m_0$ has finitely many zeros. Let $\varphi$ the scaling function defined by the infinite product formula \eqref{eqinfprodsc}. Then the translates of $\varphi$ are orthogonal if and only if the only $m_0$-cycle is the trivial one $\{0\}$.
\end{theorem}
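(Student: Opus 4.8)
The plan is to funnel the statement through Lawton's theorem (Theorem~\ref{thlawton}) and the transfer operator $R_{m_0}$. By that theorem, the translates of $\varphi$ are orthonormal precisely when the only continuous $\bz^d$-periodic fixed points of $R_{m_0}$ (the solutions of \eqref{eqlawton}) are the constants. Now $h_\varphi$ of \eqref{eqorthosc} is such a fixed point (Proposition~\ref{prophphi}(i)), continuous since $m_0$ is Lipschitz, with $h_\varphi(0)=1$ (because $\hat\varphi(0)=\prod_n m_0(0)=1$, while \eqref{eqqmf} at $x=0$ forces $m_0$ to vanish at every nonzero $A^T$-preimage of $0$, so $\hat\varphi(k)=0$ for $0\neq k\in\bz^d$), and it is the minimal nonnegative fixed point taking the value $1$ at $0$ (as recalled in the proof of Theorem~\ref{thlawton}), hence dominated by the constant solution $\mathbf 1$; thus $0\le h_\varphi\le 1$. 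So orthonormality is equivalent to $h_\varphi\equiv 1$, and Cohen's theorem reduces to: \emph{$h_\varphi\equiv 1$ if and only if the only $m_0$-cycle is $\{0\}$}. I would treat the two directions separately.

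\textbf{A nontrivial cycle forces $h_\varphi\not\equiv 1$.} Let $C=\{x_1,\dots,x_p\}$ be an $m_0$-cycle with $C\neq\{0\}$; then every $x_i\neq 0$, $A^Tx_{i-1}\equiv x_i\pmod{\bz^d}$, and $|m_0(x_i)|=1$. Because $|m_0|\equiv 1$ on $C$, the QMF identity \eqref{eqqmf} at $x_i$ forces $m_0$ to vanish at every $A^T$-preimage of $x_i$ except $x_{i-1}$. Feeding this into the refinement relation $\hat\varphi(\xi)=m_0((A^T)^{-1}\xi)\hat\varphi((A^T)^{-1}\xi)$ (immediate from \eqref{eqinfprodsc}) and using $\bz^d$-periodicity of $m_0$, an inductive unwinding along the (forced, since $C$ is a single cycle) backward orbit shows that the only $l\in\bz^d$ with $\hat\varphi(x_i+l)\neq 0$ are the partial sums $l^{(n)}_i:=\sum_{m=0}^{n-1}(A^T)^m\big(A^Tx_{i-1-m}-x_{i-m}\big)$, $n\ge 0$, and then $|\hat\varphi(x_i+l^{(n)}_i)|=|\hat\varphi(x_{(i-n)\bmod p})|$. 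The vectors $l^{(n)}_i$ are eventually pairwise distinct (over a full period $l^{(n)}_i$ changes by a nonzero vector stretched by a power of the expansive $A^T$) and the index $(i-n)\bmod p$ hits each cycle point infinitely often; hence if some $\hat\varphi(x_j)\neq 0$ then $h_\varphi(x_i)=\sum_{l}|\hat\varphi(x_i+l)|^2=\infty$, contradicting continuity of $h_\varphi$. So $\hat\varphi\equiv 0$, and thus $h_\varphi\equiv 0$, on $C$; since $h_\varphi(0)=1$ this makes $h_\varphi$ non-constant, and by Lawton the translates are not orthonormal.

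\textbf{No nontrivial cycle forces $h_\varphi\equiv 1$.} This is the harder direction, and where the hypothesis ``$m_0$ has finitely many zeros'' enters. I would follow the classical route of Cohen~\cite{Coh90,Coh90a}. \emph{Step (a): a Cohen set exists.} Let $Z_0\subset\bt^d$ be the (finite) zero set of $m_0$. Using finiteness of $Z_0$ and the absence of a nontrivial $m_0$-cycle, produce a compact set $K\subset\br^d$ that is a complete set of coset representatives for $\bt^d$, with $0\in\operatorname{int}K$, such that $m_0((A^T)^{-n}\xi)\neq 0$ for all $\xi\in K$, $n\ge 1$ --- i.e. $\hat\varphi$ does not vanish on $K$. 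Argue by contradiction: if no such $K$ existed, every fundamental domain around $0$ would be obstructed by pullbacks of $Z_0$; tracking these obstructions backward builds an infinite backward pseudo-orbit of $x\mapsto A^Tx\bmod\bz^d$ meeting $Z_0$, which by finiteness of $Z_0$ (König's lemma) is eventually periodic, and its periodic part is an $m_0$-cycle --- nontrivial since it meets $Z_0$ whereas $\{0\}$ does not. \emph{Step (b): conclude.} On $K$, Lipschitzness of $m_0$ with $m_0(0)=1$ controls the tail of $\prod_{n\ge1}m_0((A^T)^{-n}\xi)$ and gives $|\hat\varphi|\ge\delta>0$ on $K$, hence $h_\varphi\ge\delta^2>0$ on $\bt^d$; then the standard convergence argument under Cohen's condition (the partial products $\hat\varphi_n=\chi_{(A^T)^nK}\prod_{j=1}^n m_0((A^T)^{-j}\cdot)$ satisfy $\|\hat\varphi_n\|_{L^2}=1$ by iterated QMF and converge to $\hat\varphi$ in $L^2(\br^d)$, so no $L^2$-mass is lost) gives $\int_{\bt^d}h_\varphi=\|\hat\varphi\|_{L^2}^2=1$, which together with $h_\varphi\le 1$ and continuity forces $h_\varphi\equiv 1$.

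\textbf{Main obstacle.} The crux is step~(a): converting the purely dynamical hypothesis ``no nontrivial $m_0$-cycle'' into the analytic non-vanishing statement --- the compactness / König's-lemma extraction of a cycle from a failure of Cohen's condition --- which is exactly the place where finiteness of the zero set of $m_0$ is indispensable (dropping it, one can have no cycle and still no Cohen set). The rest (the lower bound on $|\hat\varphi|$ over $K$, the $L^2$-convergence of the $\hat\varphi_n$, and the unwinding computation of the first implication) is routine once $m_0$ is Lipschitz and satisfies QMF.
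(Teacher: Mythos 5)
Your overall architecture (reduce to $h_\varphi\equiv 1$ via Lawton's theorem; one direction by degenerating $h_\varphi$ on a nontrivial cycle, the other by Cohen's compact-set criterion) is a legitimate route and differs from the paper's, which runs the hard direction through the transfer operator: if orthonormality fails, Theorem \ref{thlawton} supplies a nonconstant continuous $h\ge 0$ with $R_{m_0}h=h$, one arranges $h$ to have zeros, the QMF identity \eqref{eqqmf} propagates each zero to an $A^T$-preimage at which $m_0\neq 0$, and the propagated zeros are shown to terminate in an $m_0$-cycle. But your version of the crux --- step (a), producing a Cohen set from ``no nontrivial $m_0$-cycle'' --- has a genuine flaw as sketched: you extract, via K\"onig's lemma, an eventually periodic backward pseudo-orbit meeting the zero set $Z_0$ of $m_0$ and declare its periodic part ``an $m_0$-cycle, nontrivial since it meets $Z_0$.'' By Definition \ref{defcycle} an $m_0$-cycle satisfies $|m_0(x_i)|=1$ at every point, so it is disjoint from $Z_0$; a cycle meeting $Z_0$ is precisely \emph{not} an $m_0$-cycle. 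Converting obstructions coming from zeros of $m_0$ into a cycle on which $|m_0|$ is identically $1$ is exactly the nontrivial content of this implication: in general the QMF sum has $|\det A|$ terms, so a zero of $m_0$ does not by itself produce a point where $|m_0|=1$, let alone a periodic orbit of such points. Your sketch does not supply this, so the hard direction (no nontrivial cycle $\Rightarrow$ orthonormality) is not established; the paper's Lawton-based zero-propagation (with details in the cited references) is the mechanism that does this work.

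A secondary soft spot in your first direction: the identity $|\hat\varphi(x_i+l_i^{(n)})|=|\hat\varphi(x_{(i-n)\bmod p})|$ is correct, and your divergence argument does give $\hat\varphi(x_j)=0$ at the cycle points; but the classification ``the only $l\in\bz^d$ with $\hat\varphi(x_i+l)\neq 0$ are the partial sums $l_i^{(n)}$,'' which you need in order to pass from this to $h_\varphi(x_i)\neq 1$, is not what the unwinding yields. The unwinding shows that a surviving $l$ must satisfy $l\equiv l_i^{(n)}\pmod{(A^T)^n\bz^d}$ for every $n$, and the cleaner (standard) finish is that no $l$ survives at all: if $(A^T)^{-n}(x_i+l)$ stayed congruent to the nontrivial cycle for all $n$, it would converge to $0$ while remaining at distance at least $\min_j\operatorname{dist}(x_j,\bz^d)>0$ from $\bz^d$, which is impossible; hence $\hat\varphi(x_i+l)=0$ for every $l$, so $h_\varphi(x_i)=0\neq 1=h_\varphi(0)$ and non-orthonormality follows. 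With that repair your first direction is sound; the second direction still needs the missing argument above (or the paper's transfer-operator route).
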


\begin{proof}
The idea of the proof is to use Lawton's theorem (theorem \ref{thlawton}) and consider a continuous function $h\geq0$ such that $R_{m_0}h=h$. One can arrange $h$ so that is has some zeros. If $x_0$ is a zero for $h$, then $|m_0((A^T)^{-1}(x+l))|^2h((A^T)^{-1}(x+l))=0$ for all $l\in\mathcal L$. Using the QMF condition \eqref{eqqmf}, at least one of the terms $|m_0((A^T)^{-1}(x+l))|^2$ is non-zero. So one obtains another zero of $h$, $x_1$ such that $A^Tx_1=x_0\mod\bz^d$. The process can not continue indefinitely, and the points must end in a cycle. For details see \cite{Coh90,Coh90a,DuJo06b}.
\end{proof}

In the fractal context one looks for conditions when $\Lambda_0$ defined in \eqref{eqlambda0} is a spectrum for the measure $\mu_B$. We will need a definition analogous to Definition \ref{defcycle} for iterated function systems:
\begin{definition}\label{defcycleifs}
A set $C:=\{x_1,\dots,x_p\}$ is called a {\it cycle} for the IFS $(\sigma_l)_{l\in L}$ from Definition \ref{defdualifs}, if $\tau_{l_i}x_i=x_{i+1}$ for $i\in\{1,\dots,p-1\}$, $\tau_{l_p}x_p=x_1$ for some $l_1,\dots,l_p\in L$. Let $m_B$ be the function defined in \eqref{eqmb}. The cycle $C$ is called an {\it $m_B$-cycle} if $|m_B(x_i)|=1$ for all $i\in\{1,\dots,p\}$.
\end{definition}

\begin{theorem}\label{thlabawang}\textup{(\L aba-Wang \cite{LaWa02})}
Under Assumption \ref{ass}, suppose that $m_B$ has only finitely many zeros in the attractor $X_L$ of the IFS $(\sigma_l)_{l\in L}$. Then the set $\Lambda_0$ defined in \eqref{eqlambda0} is a spectrum for the invariant measure $\mu_B$ if and only if the only $m_B$-cycle is the trivial one $\{0\}$.
\end{theorem}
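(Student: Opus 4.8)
The plan is to derive the \L aba--Wang theorem from the combination of Theorem \ref{thdutjor1} and the cycle analysis that appears in the proof of Cohen's theorem (Theorem \ref{thcohen}), transported to the IFS setting. By Theorem \ref{thdutjor1}, $\Lambda_0$ is a spectrum for $\mu_B$ if and only if the only continuous eigenfunctions of $R_{m_B}$ with eigenvalue $1$ are the constants; and by Proposition \ref{prophphi}(ii), $h_{\Lambda_0}$ is one such eigenfunction with $h_{\Lambda_0}(0)=1$ (this follows from $0\in\Lambda_0$ and $\hat\mu_B(0)=1$). So the task reduces to: under the finite-zero hypothesis on $m_B$ over $X_L$, show that $R_{m_B}h=h$ for a continuous $h\geq 0$ with $h\not\equiv$ const forces a nontrivial $m_B$-cycle, and conversely that a nontrivial $m_B$-cycle produces a nonconstant continuous fixed point of $R_{m_B}$.

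First I would record the relevant structural facts. The operator $R_{m_B}$ from \eqref{eqtransfer2} is positive, preserves continuity (since $m_B$ is a trigonometric polynomial and the $\sigma_l$ are contractions with attractor $X_L$), and fixes the constants because of the QMF identity \eqref{eqqmf2}; moreover one may restrict attention to $X_L$ since $X_L$ is $(\sigma_l)_{l\in L}$-invariant and contains the relevant dynamics. For the forward direction, suppose $h\geq 0$ is continuous, $R_{m_B}h=h$, and $h$ is not constant. By a minimality/averaging argument (as in the proof of Theorem \ref{thlawton}, citing \cite{BrJo02,Dut04}) one can normalize and subtract to arrange that $h$ attains the value $0$; let $x_0\in X_L$ with $h(x_0)=0$. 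Evaluating the fixed-point equation at $x_0$ gives $\sum_{l\in L}|m_B(\sigma_l(x_0))|^2 h(\sigma_l(x_0))=0$, so every term vanishes. By the QMF condition \eqref{eqqmf2} at $x_0$, at least one $l$ has $|m_B(\sigma_l(x_0))|\neq 0$; for that $l$ we get $h(\sigma_l(x_0))=0$, i.e. a new zero $x_1:=\sigma_l(x_0)$ of $h$ with $A^T x_1 \equiv x_0 \pmod{\bz^d}$ — equivalently $x_0$ is obtained from $x_1$ by the expansive map — and one wants in fact $|m_B(x_1)|=1$; this last point is where the QMF identity is used more carefully, since the single nonvanishing term among a partition-of-unity family at a zero of $h$ must carry the full weight $1$. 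Iterating produces an infinite forward orbit of zeros of $h$ inside the compact set $X_L$ on which $|m_B|\equiv 1$; because $m_B$ has only finitely many zeros in $X_L$ (so the ``escape'' alternative is excluded) and the points live in a compact set, the orbit must be eventually periodic, yielding a nontrivial $m_B$-cycle $C$, and since $h(0)>0$ one checks $0\notin C$, contradicting the hypothesis that $\{0\}$ is the only $m_B$-cycle. Hence $h$ is constant, and Theorem \ref{thdutjor1} gives that $\Lambda_0$ is a spectrum.

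For the converse, suppose there is a nontrivial $m_B$-cycle $C=\{x_1,\dots,x_p\}\subset X_L$ with all $|m_B(x_i)|=1$ and $0\notin C$. Then one builds a continuous nonnegative $R_{m_B}$-fixed point supported near $C$ (for instance, start from a bump at the cycle points and apply a Cesàro average of iterates of $R_{m_B}$, using positivity and the fact that on the cycle the weights are $1$ so mass is conserved there rather than dissipated), obtaining a continuous eigenfunction with eigenvalue $1$ that is not constant — indeed it can be arranged to vanish at $0$ while being positive on $C$. By Theorem \ref{thdutjor1} again, $\Lambda_0$ then fails to be a spectrum. The main obstacle, and the step I would expect to need the most care, is the forward direction's propagation argument: showing that at a zero $x_0$ of $h$ the \emph{surviving} weight forces $|m_B(\sigma_l(x_0))|=1$ (not merely nonzero), and that the resulting sequence of zeros of $h$ genuinely terminates in a cycle rather than accumulating at a zero of $m_B$ — this is exactly where the hypothesis ``$m_B$ has only finitely many zeros in $X_L$'' is consumed, and it parallels the delicate part of Cohen's argument; for the details I would refer to \cite{Coh90,Coh90a,LaWa02,DuJo06b}.
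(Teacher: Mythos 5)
Your overall skeleton---reduce the statement via Theorem \ref{thdutjor1} and Proposition \ref{prophphi}(ii) to a uniqueness question for continuous fixed points of the transfer operator \eqref{eqtransfer2}, then run a Cohen-style cycle analysis, deferring the technical details to \cite{LaWa02,DuJo06b}---is exactly the route the paper indicates (the paper itself only remarks that the proof follows the wavelet-case ideas of Theorems \ref{thlawton} and \ref{thcohen} and cites those references). Your converse direction is essentially sound, though the ``Ces\`aro average of a bump'' needs the standard equicontinuity/Arzel\`a--Ascoli bookkeeping; note that on an $m_B$-cycle the QMF identity \eqref{eqqmf2} really does force the branch through the cycle to carry weight $1$ and all other branches to vanish, so $R_{m_B}$ merely permutes values along the cycle and the limit is a nonconstant continuous fixed point. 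Alternatively, and more simply, a nontrivial $m_B$-cycle $C$ yields unit vectors $e_\lambda$ with $\lambda$ in the set generated by $-C$ that are orthogonal to every $e_\lambda$, $\lambda\in\Lambda_0$ (this is the orthogonality half of Theorem \ref{thdujo}), so $\Lambda_0$ cannot be total.

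The genuine gap is in the forward direction, at precisely the step you flag. At a zero $x_0$ of $h$, the fixed-point equation only says that each product $|m_B(\sigma_l(x_0))|^2\,h(\sigma_l(x_0))$ vanishes; the QMF identity says the weights $|m_B(\sigma_l(x_0))|^2$ sum to $1$, \emph{not} that only one of them is nonzero, and certainly not that the surviving one equals $1$. Typically all $N$ weights at $x_0$ are strictly between $0$ and $1$; then every $\sigma_l(x_0)$ is again a zero of $h$, and you learn nothing about $|m_B|$ attaining the value $1$ anywhere. Thus your propagation produces a forward-invariant set of zeros of $h$ inside $X_L$, but not points where $|m_B|=1$, so it does not produce an $m_B$-cycle; obtaining the extremality $|m_B|\equiv1$ on a periodic orbit is the real content of \L aba--Wang and of \cite{DuJo06b}, and it comes from a different mechanism (roughly: work with the specific fixed point $h_{\Lambda_0}$ and $1-h_{\Lambda_0}$, iterate the transfer operator, and use the infinite product \eqref{eqinfprodme} together with the finite-zero hypothesis to force the iterated weights to concentrate along orbits on which $|m_B|$ tends to $1$, which then must close up into cycles). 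Relatedly, your assertion that the sequence of zeros ``must end in a cycle'' (eventual periodicity rather than mere accumulation in the compact attractor) is not justified: finiteness of the zero set of $m_B$ on $X_L$ does not by itself exclude an aperiodic orbit of zeros of $h$. As written, the implication ``only the trivial $m_B$-cycle $\Rightarrow$ $\Lambda_0$ is a spectrum'' is not established.
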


\begin{remark} \L aba and Wang proved this theorem for dimension $d=1$ , but their argument works also in higher dimensions, see \cite{DuJo06b}. The proof follows the same ideas as in the wavelet case.
\end{remark}

\subsection{Super-wavelets and the completion of $\Lambda_0$}\label{super}

What happens in wavelet theory if the translates of the scaling function are not orthogonal, so Lawton's condition in Theorem \ref{thlawton} and Cohen's condition in Theorem \ref{thcohen} fail? One still gets an interesting property of the associated wavelet $\psi$ in $L^2(\br^d)$ does not generate an orthonormal basis but a {\it Parseval frame} in \eqref{eqwav} (see \cite{Law91,Dau92,BrJo02}).

\begin{definition}\label{defparseval}
A family of vectors $(e_i)_{i\in I}$ in a Hilbert space $H$ is called a Parseval frame if 
$$\|f\|^2=\sum_{i\in I}|\ip{f}{e_i}|^2,\quad(f\in H).$$
\end{definition}

The discovery that that the MRA wavelets are Parseval frames even in the case when they might not be orthogonal is usually credited to the pioneering paper of Wayne Lawton \cite{Law91}. In the book \cite{BrJo02} this phenomenon is identified with a natural class of representations of the Cuntz algebras, one that in turn is motivated by signal processing in communication engineering. The authors of \cite{BrJo02} prove that the Parseval property follows from the axioms that defining these Cuntz algebra representations. In a more general context, for the generalized multiresolutions (GMRAs) the idea was extended and worked out in \cite{BJMP05}.

Cohen's condition implies that there are some non-trivial $m_0$-cycles. It was shown in \cite{Dut04,BDP05} (for dimension $d=1$) that these $m_0$-cycles can be used to construct a ``super-wavelet'': that is a vector $\tilde\psi$ in a ``super-space'' of the form $L^2(\br)\oplus\dots\oplus L^2(\br)$ such that bay applying certain dilations and translations to $\tilde\psi$ (as in \eqref{eqwav}) one obtains an orthonormal basis. Moreover, the classical wavelet $\psi$ in $L^2(\br)$ and its Parseval frame is obtained by projecting the super-wavelet basis onto the first component. Also, the wavelet is obtained by a multiresolution technique, applied in the super-space, but with the same filter $m_0$.

\begin{theorem}\label{thbdp}\textup{(Bildea-Dutkay-Picioroaga, \cite{Dut04,BDP05})}
When Cohen's condition in Theorem \ref{thcohen} is not satisfied, then $m_0$ generates a multiresolution orthogonal super-wavelet and an orthogonal scaling function for a space of the form $L^2(\br)\oplus\dots\oplus L^2(\br)$. Moreover the projection of these onto the first component gives the classical scaling function and Parseval wavelet in $L^2(\br)$.
\end{theorem}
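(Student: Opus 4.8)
The plan is to enlarge $L^2(\br)$ to a finite direct sum $\mathcal H:=L^2(\br)^{\oplus m}$ in which the defect created by the failure of Cohen's condition is absorbed by extra coordinates indexed by the non-trivial $m_0$-cycles, and then to carry out the multiresolution construction of Definition \ref{defmul} inside $\mathcal H$. The point of the bookkeeping is that the single filter $m_0$ reappears as the $(0,0)$ entry of a matrix-valued refinement, so that projecting onto the first coordinate recovers the classical scaling function and wavelet. (This is the transfer-operator / Cuntz-algebra picture mentioned above.) We work in dimension $d=1$, so $A$ is an integer $\ge 2$ and $N=|\det A|=A$.

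First I would set up the cycle bookkeeping. Since Cohen's condition in Theorem \ref{thcohen} fails, list the $m_0$-cycles (Definition \ref{defcycle}): the trivial one $\{0\}$ and the non-trivial ones $C^{(1)},\dots,C^{(s)}$, with $C^{(t)}=\{\theta^{(t)}_1,\dots,\theta^{(t)}_{p_t}\}$ and $|m_0|\equiv1$ on each $C^{(t)}$. For every cycle point $\theta$ I would build a \emph{modulated scaling function} $\varphi_\theta$ via an infinite product $\hat\varphi_\theta(\xi)=\prod_{k\ge1}m_0(A^{-k}\xi+\theta_{[k]})$, where $\theta_{[k]}$ runs cyclically through the points of the cycle containing $\theta$ (starting at $\theta$), chosen so that $\hat\varphi_\theta$ plays, at its cycle point, the normalizing role that $\hat\varphi$ plays at $0$; here $\varphi_0:=\varphi$ is the classical scaling function of \eqref{eqinfprodsc}. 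The crucial input is the fixed-point theory of the transfer operator $R_{m_0}$ (Definition \ref{deftransfer}, Proposition \ref{prophphi}, and \cite{BrJo02,Dut04}): each $h_{\varphi_\theta}(\xi):=\sum_{k\in\bz}|\hat\varphi_\theta(\xi+k)|^2$ is a non-negative continuous fixed point of $R_{m_0}$, and because $h_\varphi$ is the \emph{minimal} non-negative fixed point with value $1$ at $0$ (see the proof of Theorem \ref{thlawton}) while the constant $1$ is also such a fixed point, the difference $1-h_\varphi\ge0$ is a fixed point concentrated on the non-trivial cycles, and it decomposes into the $h_{\varphi_\theta}$ with $\theta$ running over the non-trivial cycle points. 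This yields the \emph{defect identity}
$$\sum_{\theta} h_{\varphi_\theta}\equiv1,$$
the sum over all cycle points of all cycles (including $\theta=0$). Setting $\Phi:=(\varphi_0,\varphi_{\theta^{(1)}_1},\dots,\varphi_{\theta^{(s)}_{p_s}})\in\mathcal H$ with $m=1+\sum_t p_t$, the periodization/Plancherel argument behind Proposition \ref{proporthosc}, applied coordinatewise and summed, shows that the defect identity is precisely the assertion that $\{\Phi(\cdot-k):k\in\bz\}$ is orthonormal in $\mathcal H$.

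Next I would build the super-multiresolution. Each $\hat\varphi_\theta$ satisfies a refinement relation $\hat\varphi_\theta(A\xi)=m_0(\xi+\theta)\hat\varphi_{\theta'}(\xi)$ for an adjacent cycle point $\theta'$; assembling these shows $\Phi$ obeys a matrix-valued scaling equation whose $(0,0)$ entry is the original $m_0$, the dilation acting on $\mathcal H$ as ordinary dilation composed with the cyclic permutation of the cycle coordinates and the modulations $e_\theta$. Put $V_0:=\Span\{\Phi(\cdot-k):k\in\bz\}$ and define $V_n$ by this twisted dilation; then I would check (i)--(v) of Definition \ref{defmul} in $\mathcal H$: nesting $V_n\subset V_{n+1}$ from the matrix scaling equation, density of $\bigcup_nV_n$ and $\bigcap_nV_n=\{0\}$ from boundedness and the normalization of the $\hat\varphi_\theta$ (Mallat-type arguments on the Fourier side, now using the shifted infinite products). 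Completing the polyphase matrix built from $m_0$ to a unitary matrix filter produces $W_0=V_1\ominus V_0$ and a finite family of super-wavelets whose twisted dilates and integer translates form an orthonormal basis of $\mathcal H$. Finally let $P:\mathcal H\to L^2(\br)$ be the projection onto the first summand: since the $\{0\}$-coordinate is fixed by the permutation and carries the trivial modulation, $P$ intertwines the twisted dilation with the ordinary one, so $P\Phi=\varphi$ and $P$ carries the super-wavelet basis to the classical affine system \eqref{eqwav} for the resulting $\psi$; as the image of an orthonormal basis under an orthogonal projection is automatically a Parseval frame (Definition \ref{defparseval}), $\psi$ generates a Parseval frame and one recovers the classical multiresolution with filter $m_0$. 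That is the theorem.

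The main obstacle is the first part: constructing the modulated scaling functions $\varphi_\theta$ so that the products converge and normalize correctly, and establishing the defect identity $\sum_\theta h_{\varphi_\theta}\equiv1$. This rests on the full spectral description of $R_{m_0}$ — that its peripheral eigenspace at $1$ is finite-dimensional and exhausted by contributions of the $m_0$-cycles, together with the minimality of $h_\varphi$ — which is the piece imported from \cite{BrJo02,Dut04}. A secondary subtlety is that, because of the modulations, the extra coordinates of $V_n$ do not literally ``escape to infinity,'' so $\bigcap_nV_n=\{0\}$ must be verified on the Fourier-transform side through the shifted infinite products rather than by a support argument.
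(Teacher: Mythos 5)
Your overall architecture is the one used in the works the paper cites for this theorem (the paper itself supplies no proof, only the illustrative Example \ref{exstretched}): components indexed by the points of the $m_0$-cycles, scaling functions built from cycle-shifted infinite products, orthogonality coming from the decomposition of the constant function $1$ into cycle fixed points of the transfer operator, and the Parseval frame downstairs obtained by projecting an orthonormal basis. But there is a concrete error at the crux. You place the modulations $e_\theta$ in the dilation operator and then claim that the unshifted identity $\sum_\theta h_{\varphi_\theta}\equiv 1$ is exactly orthonormality of the plain translates $\{\Phi(\cdot-k)\,|\,k\in\bz\}$. Test this against the paper's own Example \ref{exstretched}: there $m_0(x)=\frac12(1+e^{2\pi i3x})$ is invariant under the shifts by $\frac13$ and $\frac23$, so all your $\varphi_\theta$ coincide with $\varphi=\frac13\chi_{[0,3)}$; the plain integer translates of $(\varphi,\varphi,\varphi)$ are certainly not orthogonal, and $\sum_\theta h_{\varphi_\theta}=3h_\varphi$, which equals $3$ at $x=0$. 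What is true, and what the example's operator $T_3(f_1,f_2,f_3)=(Tf_1,e^{2\pi i\frac13}Tf_2,e^{2\pi i\frac23}Tf_3)$ encodes, is the shifted identity $\sum_\theta h_{\varphi_\theta}(x-\theta)=1$; equivalently, one must either use the twisted translations (modulation by $e^{2\pi i\theta}$ per unit step, with the cyclic permutation, and no modulation, in the dilation) or build the frequency shift by $\theta$ into the $\theta$-component (replace $\varphi_\theta$ by $e_\theta\varphi_\theta$). These pictures are unitarily equivalent via a componentwise modulation, but your version mixes them—unmodulated components, plain translations, unshifted defect identity—and that combination fails.

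Two smaller points. First, your products $\prod_{k\geq1}m_0(A^{-k}\xi+\theta_{[k]})$ need the phase corrections $\overline{m_0(\theta_{[k]})}$ in each factor: the tail factors tend to the unimodular numbers $m_0(\theta_{[k]})$, not to $1$, so the product as written need not converge unless $m_0\equiv1$ on the cycle (as happens, coincidentally, in the example). Second, an individual $h_{\varphi_\theta}$ is in general not fixed by $R_{m_0}$; the transfer operator permutes the functions attached to the points of a given cycle, and only the sum over each cycle is a fixed point—which is all you need for the decomposition of $1$, the genuinely imported ingredient from \cite{BrJo02,Dut04}. With the modulations put in the right place these are repairs rather than a change of route: corrected, your sketch is the construction of \cite{Dut04,BDP05} that the theorem refers to.
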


\begin{remark}
The super-wavelet construction in $L^2(\br)\oplus\dots\oplus L^2(\br)$ requires the two unitary operators: the dilation and translation operators. They are defined in \cite{Dut04,BDP05}. We will not include the precise definitions here but we will give an example which contains most of the important ideas.
\end{remark}

\begin{example}\label{exstretched}\textup{(The stretched Haar wavelet)}
Let $d=1$, $A=2$ and $m_0(x)=\frac{1}{2}(1+e^{2\pi i 3x})$. This is a low-pass filter, $m_0(0)=1$ and it satisfies the QMF condition \eqref{eqqmf}. The infinite product formula yields 
$$\varphi=\frac13\chi_{[0,3)}.$$ This satisfies the scaling equation \eqref{eqscaling}, but the translates of it are clearly not orthogonal. The associated wavelet is $$\psi=\frac{1}{3}\chi_{[0,\frac32)}-\frac13\chi_{[\frac32,3)}.$$
The family in \eqref{eqwav} is not an orthonormal basis, but it is a Parseval frame.

Note that there is a non-trivial $m_0$-cycle, namely $\{\frac13,\frac23\}$. On the Hilbert space $L^2(\br)\oplus L^2(\br)\oplus L^2(\br)$ define the dilation and translation operators 
$$U_3(f_1,f_2,f_3)(x)=(Uf_1,Uf_3,Uf_2),\quad T_3(f_1,f_2,f_3)=(Tf_1,e^{2\pi i\frac13}Tf_2,e^{2\pi i\frac23}Tf_3),$$
where $Uf(x)=\frac{1}{\sqrt2}f(x/2)$, $Tf(x)=f(x-1)$ are the usual dilation and translation operators in $L^2(\br)$.

Define the super-scaling function and super-wavelet by
$$\varphi_3=(\varphi,\varphi,\varphi),\quad \psi_3=(\psi,\psi,\psi).$$

The results in \cite{Dut04,BDP05} prove that
\begin{enumerate}
\item
 The super-scaling function $\varphi_3$ satisfies the scaling equation with filter $m_0$:
 $$U_3\varphi_3=\frac{1}{\sqrt2}(\varphi_3+T_3^3\varphi_3).$$
\item 
The translates of $\varphi_3$ are orthogonal:
$$\ip{T_3^k\varphi_3}{T_3^{k'}\varphi_3}=\delta_{k,k'},\quad (k,k'\in\bz).$$
\item The super-wavelet $\psi_3$ generates the orthonormal basis:
$$\{U_3^jT_3^k\psi_3\,|\,j,k\in\bz\}$$
is an orthonormal basis for $L^2(\br)\oplus L^2(\br)\oplus L^2(\br)$.
\end{enumerate}
It is clear that results of the theory in $L^2(\br)$ for this example are recuperated by a projection onto the first component.

\begin{remark}
The fact that the components of $\varphi_3$ and $\psi_3$ are the same in this case is just a coincidence. In general, the components of the super-scaling function are defined by an infinite product formula similar to \eqref{eqinfprodsc}, but involving the cycle. Then the super-wavelet is defined from the super-scaling function, just as in the classical multiresolution theory, but using the dilation and translation of the super-space. We refer again to \cite{Dut04,BDP05} for details.
\end{remark}
\end{example}

Let us look now at the fractal measure $\mu_B$, and see what happens if the condition in the \L aba-Wang theorem is not satisfied, that is there are some non-trivial $m_B$-cycles. In this case the exponentials $e^{2\pi i\lambda x}$ with $\lambda\in \Lambda_0$ do form an orthonormal set, but the set is incomplete. It turns out that, even in this case, with some extra assumptions on $m_B$, $\mu_B$ is a spectral measure, and each $m_B$-cycle will contribute to a subset of the spectrum. Putting together these contributions one obtains the entire spectrum for $\mu_B$, and $\Lambda_0$ is just the contribution of the trivial $m_B$-cycle $\{0\}$.

\begin{theorem}\label{thdujo}\textup{(Dutkay-Jorgensen \cite{DuJo06b})}
Suppose Assumption \ref{ass} holds and $m_B$ has only finitely many zeros on the attractor of the IFS $(\sigma_l)_{l\in L}$. Let $\Lambda$ be the smallest set that contains $-C$ for all $m_B$-cycles $C$, and which satisfies the invariance property
$$A^T\Lambda+L\subset\Lambda.$$
Then $\Lambda$ is a spectrum for the measure $\mu_B$.
\end{theorem}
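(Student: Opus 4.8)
The plan is to reduce everything to the single identity $h_\Lambda\equiv 1$, where $h_\Lambda(x):=\sum_{\lambda\in\Lambda}|\hat\mu_B(x+\lambda)|^2$, and then to establish that identity by the transfer–operator machinery of Subsection~\ref{lawton}. By Proposition~\ref{proporthome}, once $h_\Lambda\equiv 1$ is known both the orthonormality of $\{e_\lambda\}_{\lambda\in\Lambda}$ (evaluate at $x=-\lambda'$) and its completeness (Parseval together with Stone-Weierstrass) follow. Since the argument below uses the Bessel bound $h_\Lambda\le 1$, I would first record that $\{e_\lambda\}_{\lambda\in\Lambda}$ is orthonormal: this is the word-length induction that proves the same statement for $\Lambda_0$ in \cite{JoPe98}, with the points of the $m_B$-cycles supplying the base cases and the relation $-c_i=A^T(-c_{i+1})+l_i$ (a rewriting of $\sigma_{l_i}(c_i)=c_{i+1}$) controlling them; the only ingredient is the Hadamard condition \eqref{eqhada}, in the form $m_B((A^T)^{-1}(l-l'))=\delta_{l,l'}$. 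One checks in passing that $\Lambda$ lies in the dual of the lattice generated by $B$, so that $m_B$ may be treated as $\bz^d$-periodic throughout.

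The first substantive step is the self-similar structure of $\Lambda$. By minimality, $\Lambda$ consists exactly of the points $(A^T)^n(-c)+\sum_{j=0}^{n-1}(A^T)^jl_j$ with $c$ a point of some $m_B$-cycle, $n\ge 0$, and $l_j\in L$; the relation $-c_i=A^T(-c_{i+1})+l_i$ shows that even the ``sources'' $-c$ belong to $A^T\Lambda+L$, so $\Lambda=\bigcup_{l\in L}(l+A^T\Lambda)$, and this union is disjoint because the Hadamard condition puts the elements of $L$ in distinct cosets of $A^T\bz^d$. Writing $\lambda=l+A^T\lambda'$ and using the functional equation \eqref{eqinvariancef} with the periodicity of $m_B$ gives $\hat\mu_B(x+l+A^T\lambda')=m_B(\sigma_l(x))\,\hat\mu_B(\sigma_l(x)+\lambda')$; squaring, summing over the disjoint covering, and comparing with the definition \eqref{eqtransfer2} of $R_{m_B}$ yields $R_{m_B}h_\Lambda=h_\Lambda$. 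Moreover, for any point $c$ of any $m_B$-cycle we have $-c\in\Lambda$, so the term $\lambda=-c$ in the sum defining $h_\Lambda(c)$ contributes $|\hat\mu_B(0)|^2=1$; combined with the Bessel bound this gives $h_\Lambda(c)=1$.

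The crux is to upgrade this to $h_\Lambda\equiv 1$. Put $g:=1-h_\Lambda$: it is nonnegative, bounded by $1$, upper semicontinuous (being $1$ minus the lower-semicontinuous function $h_\Lambda$, a sum of continuous nonnegative functions), satisfies $R_{m_B}g=g$ because $R_{m_B}\mathbf 1=\mathbf 1$ by the QMF identity \eqref{eqqmf2}, and vanishes at every point of every $m_B$-cycle. On the compact attractor $X_L$ of $(\sigma_l)_{l\in L}$, $g$ attains its maximum $M$ on the nonempty closed set $F=\{x\in X_L:g(x)=M\}$. If $M>0$, then for $x\in F$ one has $\sigma_l(x)\in X_L$, $g(\sigma_l(x))\le M$, and $M=\sum_{l\in L}|m_B(\sigma_l(x))|^2g(\sigma_l(x))\le M$ by \eqref{eqqmf2}; equality forces $\sigma_l(x)\in F$ for every $l$ with $m_B(\sigma_l(x))\ne 0$. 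Following these non-vanishing branches from a point of $F$ produces a forward orbit inside $F\subset X_L$ that avoids the zeros of $m_B$; since by hypothesis $m_B$ has only finitely many zeros in $X_L$, the descent used to prove Cohen's theorem (Theorem~\ref{thcohen}) and the \L aba-Wang theorem (Theorem~\ref{thlabawang}) forces this orbit to close up into an $m_B$-cycle, on which $g\equiv 0$ --- contradicting $g\equiv M>0$ on $F$. Hence $M=0$ and $h_\Lambda\equiv 1$ on $X_L$. To extend this to all of $\br^d$, iterate $R_{m_B}h_\Lambda=h_\Lambda$: $h_\Lambda(x)$ becomes a convex combination (weights summing to $1$, by the iterated QMF relation) of values of $h_\Lambda$ at points $\sigma_{l_1}\circ\cdots\circ\sigma_{l_n}(x)$, which converge to $X_L$ uniformly in $(l_1,\dots,l_n)$ as $n\to\infty$; since the upper-semicontinuous $g$ vanishes on the compact $X_L$, it is $<\varepsilon$ on a neighborhood of $X_L$, so $h_\Lambda(x)\ge 1-\varepsilon$ for large $n$, i.e.\ $h_\Lambda(x)=1$. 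By Proposition~\ref{proporthome}, $\Lambda$ is a spectrum for $\mu_B$.

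I expect the main obstacle to be exactly the descent step: showing that the bad forward orbit --- which lives in the compact set $X_L$ and dodges the finitely many zeros of $m_B$ there --- must genuinely close up into an $m_B$-cycle (a periodic orbit on which $|m_B|\equiv 1$), rather than merely accumulate or fall into a periodic orbit on which $m_B$ is only nonzero. This is the point at which the hypothesis on the zeros of $m_B$ is consumed, and it is the fractal counterpart of Cohen's argument and of the super-wavelet packaging of non-trivial $m_0$-cycles (Theorem~\ref{thbdp}). The remaining bookkeeping --- the disjoint self-similar covering of $\Lambda$, the orthonormality induction, the precise role of the Hadamard condition, and the legitimacy of treating $m_B$ as $\bz^d$-periodic --- is routine but must be carried out with care; this is done in \cite{DuJo06b}.
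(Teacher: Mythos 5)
Your overall reduction is sound, and it is in fact the skeleton of the argument this statement rests on (note the paper itself gives no proof of Theorem \ref{thdujo}; it defers entirely to \cite{DuJo06b}, whose strategy is the transfer-operator framework of Subsection \ref{lawton}). The points you get right: the minimality of $\Lambda$ together with $-c_i=A^T(-c_{i+1})+l_i$ gives $\Lambda=\bigcup_{l\in L}(l+A^T\Lambda)$, disjointly by the Hadamard condition; this yields $R_{m_B}h_\Lambda=h_\Lambda$; orthonormality (hence the Bessel bound $h_\Lambda\le 1$) is a standard word-length induction using $m_B((A^T)^{-1}(l-l'))=\delta_{l,l'}$; $-c\in\Lambda$ gives $h_\Lambda(c)=1$ at every cycle point; and Proposition \ref{proporthome} reduces the theorem to $h_\Lambda\equiv 1$. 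Even the periodicity point you wave at is genuinely checkable: $|m_B(c)|=1$ with $0\in B$ forces $b\cdot c\in\bz$ for all $b\in B$, and the cycle relation $c_i+l_i=A^Tc_{i+1}$ propagates this to $(A^nb)\cdot c\in\bz$, so $m_B(\cdot+\lambda)=m_B(\cdot)$ for $\lambda\in\Lambda$.

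The genuine gap is exactly the step you flag and do not close: the maximum-set descent does not produce an $m_B$-cycle. From $x\in F=\{g=M\}$ and $R_{m_B}g=g$ you only learn that every branch $\sigma_l$ with $m_B(\sigma_l x)\neq 0$ stays in $F$; iterating gives an orbit in $X_L$ avoiding the finitely many zeros of $m_B$. Nothing forces that orbit to be eventually periodic, and even if it were, or accumulated on a periodic orbit, ``$m_B\neq 0$ along the orbit'' is far weaker than ``$|m_B|\equiv 1$ on the cycle'': on most of $X_L$ one has $0<|m_B|<1$, and an orbit can remain in that region forever, so no contradiction with $g=0$ on the extreme cycles is reached. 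The finitely-many-zeros hypothesis does not bridge this by itself, and invoking ``the descent used to prove Cohen's theorem and the \L aba--Wang theorem'' as a black box does not help, because in those proofs the passage from zero-avoidance to a cycle on which the modulus of the filter is identically $1$ is precisely the technical heart; in \cite{DuJo06b} it is carried out through a finer analysis of the continuous fixed points of $R_{m_B}$ and of the extreme cycles (this is also where continuity of $h_\Lambda$, which you bypass via semicontinuity, is actually established and used), and that is where the hypothesis on the zeros of $m_B$ is really consumed. So what you have is a correct chain of reductions, consistent with Propositions \ref{prophphi}, \ref{proporthome} and Theorems \ref{thdutjor1}, \ref{thlabawang}, followed by an honest placeholder where the main argument of \cite{DuJo06b} must go; the central rigidity step remains unproved in your proposal.
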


\begin{example}\label{ex4_2}
Consider the affine IFS in Example \ref{ex4}. $d=1$, $A=4$, $B=\{0,2\}$ so $\tau_0(x)=x/4$, $\tau_2(x)=(x+2)/4$. We saw that we can pick $L=\{0,1\}$ and this will yield the spectrum $\Lambda=\{\sum_{k=0}^n4^kl_k\,|\, l_k\in\{0,1\}\}$ for the fractal measure $\mu_B$. The dual IFS is $\sigma_0(x)=x/4$, $\sigma_1(x)=(x+1)/4$. The function $m_B(x)=\frac12(1+e^{2\pi i 2x})$ will have only the trivial $m_B$-cycle.

But we can also pick $L=\{0,3\}$. The Assumption \ref{ass} is verified. The dual IFS is now $\sigma_0(x)=x/4$, $\sigma_3(x)=(x+3)/4$. There are two $m_B$-cycles now. One is the trivial one $\{0\}$ (with $\sigma_00=0$). The other is $\{1\}$ with $\sigma_31=1$. The spectrum $\Lambda_3$ given by Theorem \ref{thdujo} is
$$\Lambda_3:=\left\{\sum_{k=0}^n4^kl_k\,|\,l_k\in\{0,3\}\right\}\cup\left\{-1+\sum_{k=0}^n4^kl_k\,|\,l_k\in\{0,-3\}\right\}.$$

Actually, one can take any $L=\{0,p\}$ with $p$ odd, and with Theorem \ref{thdujo} will yield a different spectrum for $\mu_B$. And there does not seem to be any direct relation between these spectra for different values of $p$.
\end{example}

\section{Open problems}\label{open}
\begin{problem}\label{prob1}\textup{(\cite{DuJo07b})}
Under Assumption \ref{ass} prove that $\mu_B$ is a spectral measure.
\end{problem}

Theorem \ref{thdujo} shows that this statement is true in dimension $d=1$. In \cite{DuJo07b} we proved that this is true also for higher dimension if a certain extra technical condition is imposed on $(A,B,L)$. This condition allows infinitely many zeros for $m_B$ in the attractor of the IFS $(\sigma_l)_{l\in L}$, so the results of \cite{DuJo07b} are more general than Theorem \ref{thdujo}. However we were not able to remove this extra condition, but we strongly believe that the result remains true without it. Another possibility is that the condition is automatically satisfied under Assumption \ref{ass}.

\begin{problem}\label{prob2}\textup{(\cite{LaWa02})}
Let $\mu_{\mathcal B,p}$ be the invariant probability measure associated to the IFS $\tau_b(x)=\rho(x+\beta)$, $\beta\in \mathcal B$, and probabilities $(p_i)_{i=1}^N$, with $|\rho|<1$ and $\mathcal B\subset \br$, $|B|=N$, $p_i>0$. 

Suppose $\mu_{\mathcal B,p}$ is a spectral measure. Then

\begin{enumerate}
\item $\rho=\frac1P$ for some $P\in\bz$.
\item $p_1=\dots=p_N=\frac1N$.
\item Suppose $0\in \mathcal B$. Then $\mathcal B=\alpha B$ for some $\alpha\neq 0$ and some $B\subset\bz$ such that there exists $L$ that makes $(P,B,L)$ a Hadamard triple.
\end{enumerate}
\end{problem}

We obtained some results in this direction in \cite{DuJo07a}, but we are still far from settling this problem.

\begin{problem}\label{prob3}
Under Assumption \ref{ass}, prove that 
$$\mu_B(\tau_b(X_B)\cap\tau_{b'}(X_B))=0,\quad(b,b'\in B,b\neq b')$$
\end{problem}

Thus we are interested if the affine IFS has (measurable) overlap. \medskip

The next Problem discusses how IFS-Hutchinson measures $\mu$ depend on the geometry in an ambient $\br^d$, i.e, the initialization of the IFS recursive scheme for $\mu$.

        The related discussion of Corollary \ref{cor1_1} in the Introduction involves analyticity, so the ambient space is the complex plane and begins with the open set, the disk $\{z\, |\, |z| < 1\}$. For this case, the selfsimilarity pattern that initializes our IFS is identified on the boundary circle. While the IFS-Hutchinson measures typically have a definite fractal dimension, there are many ways of arriving at them from some choice of transformations in an ambient $\br^d$.

       In the next section, we discuss in detail the interplay between open sets in 
       $\br^d$ with selfsimilarity patterns, and an associated class of IFS-Hutchinson measures arising from an iteration-limit applied to an initial geometric pattern in $\br^d$. Hence we may ask for the property in Definition \ref{defspectral} (that the measure is spectral) to hold at the first step, i.e., for the restriction of Lebesgue measure to a suitably chosen open set $\Omega$ in $\br^d$. We are asking about how this property predicts the IFS-measures $\mu$ arising from a fractal iteration scheme: Specifically, is an IFS-measure $\mu$ spectral if its initialized counterpart is? Here ``the initialized counterpart'' refers to the data in (ii) below; and it will be taken up in further detail in the last section (Theorem \ref{ThmFJP} in our paper).

        We now make this theme precise in the following:
\begin{problem}\label{prob4}
 Consider an affine system given by the pair $(A, B )$ in $\br^d$ with the usual conditions on the $d$ by $d$ matrix $A$ and the subset $B$:\begin{enumerate}
\item[(a)]
The matrix $A$ has integral entries and is expansive; and
\item[(b)] 0 is in the set $B$, and $B$ is contained in $\bz^d$.
\end{enumerate}
Then the following three affirmations are equivalent:
\begin{enumerate}
\item There is a subset $L$  contained in $\bz^d$ such that $|L| = |B| (= N)$ and $(A, B , L)$ is a Hadamard triple.
\item If the system $(A, B)$ is initialized with an open subset $\Omega$ in $\br^d$, then the restriction of Lebesgue measure to $\Omega$ is spectral. (Is the initial measure spectral?)
\item The IFS Hutchinson-measure $\mu$ constructed from the initial system $(A, B )$ in $\br^d$ by the recursive recipe in Definitions \ref{defaff} and \ref{definvmea} is spectral. (Is the final measure spectral?)
\end{enumerate}
 \end{problem}

\begin{problem}\label{prob5}
Characterize the polynomials $z \mapsto R(z)$ for which there is an isometric isomorphism $W$ of $L^2(J(R), \mu_R)$  with the Hilbert space $K^2(R, P)$ for some finite set $P (= P(R))$ of distinct polynomials each of degree less than the degree of $R$, and with $W$ taking the form $F\mapsto (F_p)_{p \in P}$, where $F(z) = \sum_{p \in P} p(z) F_p(R(z))$.
\end{problem}

\begin{remark}\label{rem3dz}
Specifically, the requirement on the isometry $L^2(\mu)\ni f\stackrel{W}{\rightarrow}F_f\in K^2(R,P)$ is as follows: $W$ is defined on $L^2(\mu)$ and maps onto the subspace $K^2(R,P)$ in $H^2$.

Moreover
$$\|f\|_{L^2(\mu)}^2=\|F_f\|_{H^2}^2=\sum_{p\in P}\|F_p\|_{H^2}^2.$$
\end{remark}

%\appendix
\section{Appendix, with some of Jorgensen's recollections of conversations
with Irving~Segal, Marshall~Stone, Bent~Fuglede, and David~Shale.}\label{app}

While this paper emphasizes orthogonality, stressing fractal measures, many of the problems have a history beginning with the case when the measure $\mu$ under consideration is a restriction of Lebesgue measure on $\br^d$. The original problem was motivated by von Neumann's desire to use his Spectral Theorem on partial differential operators (PDOs), much like the Fourier methods had been used boundary value problems in ordinary differential equations.

   What follows is an oral history, tracing back some key influences in the subject, step-by-step, three or four mathematical generations. The reader is cautioned that it is a subjective account. And it should be added that analysis of fractal measures derived inspiration from a diverse variety of sources: potential theory, geometric measure theory, the study of singularities of solutions to partial differential equations (PDEs), and dynamics. Here the focus is on operator theory and harmonic analysis, subjects which in turn derived their inspiration from much the same sources, and from quantum physics. A central character in our account below is {\it projection valued measures} as envisioned by John von Neumann \cite{vNeu32b}and Marshall Stone \cite{Sto32b}.

    In what follows we trace influences from operator theory and mathematical physics, with projection valued measures playing a key role. Much of it was passed onto us from generations back.

We begin the history of what we now call the Fuglede problem \cite{Fug74} with conversations the second named author had with Irving Segal, Marshall Stone, Bent Fuglede, and David Shale. Since this never found its way into print, in this appendix, we draw up connections to early parts of operator theory.

Starting in the early fifties, only a few years after the end of WWII, and
still in the shadow of the war, mathematical physicists returned to the
center of attraction around Niels Bohr in Copenhagen. Both Niels Bohr, and
his mathematician brother Harald were still active; and still inspired
members of the next generation. This is at the same time the generation of
mathematicians who inspired me, along with a large number of other
researchers in functional analysis, in operator theory, in partial
differential equations, in harmonic analysis, and in mathematical physics.

Included in the flow of scientists who visited Copenhagen around the time
was some leading analysts from the University of Chicago, and elsewhere in
the US, Marshal Stone, Irving Segal, Richard Kadison, George Mackey, and Arthur Wightman to
mention only a few. What this group of analysts has in common is that the
members were all much inspired by John von Neumann's view on mathematical
physics and operator theory. From conversations with Segal, I sense that von
Neumann was viewed as a sort of Demi-God.

The forties and the fifties was the Golden Age of functional analysis, the
period when the spectral theorem, and the spectral representation theorem,
were being applied to problems in harmonic analysis, and in quantum field
theory. Here are some examples: (1) Irving Segal's Plancherel theorem for
unitary representations of locally compact unimodular groups \cite{Seg50};
(2) the Gelfand--Naimark--Segal construction of representations of $C^{\ast }
$-algebras from quantum mechanical states \cite{Nai56,Nai59}; (3) Stone's
theorem for unitary one-parameter groups; and its generalization \cite%
{Sto32b}, (4) the Stone--Naimark--Ambrose--Godement theorem for
representations of locally compact abelian groups; (5) the rigorous
formulation of the Stone--von Neumann uniqueness theorem \cite%
{vNeu32b,vNeu68}; and (6) its generalization into what became the Mackey
machine for the study of unitary representations of semi-direct products of
continuous groups, including (7) the imprimitivity theorem \cite{Mac52,Mac53}
for induced representations of unitary representations of locally compact
groups. See also \cite{SeGo65,GaWi54,Hor55,Sha62}.

In this context, we find much work involving unbounded operators in Hilbert
space, typically non-commuting operators, for example axiomatic formulations
of the momentum and position operators, and in a context where the dense
domain of the operators must be taken very seriously. The simplest such
operators are the momentum and position operators $Q$ and $P$ that form toy
examples of quantum fields; i.e., the operators of Heisenberg's uncertainty
inequality \cite{GaWi54}.

While the spectral theorem had already played a major role in mathematical
physics, at the same time Lars H\"{o}rmander \cite{Hor55} and Lars G\aa rding \cite{GaWi54} in Sweden
were using functional analytic methods in the study of elliptic problems in
PDE theory.

In his first visit to Copenhagen in the early fifties, Irving Segal brought
with him one of his graduate students, David Shale, who was supposed to have
written a thesis on commuting selfadjoint extensions of partial differential
operators, and apparently achieved some initial results; never published in
any form.

\textbf{A baby example:} To understand the derivative operator $d/dx$ in a
finite interval, we know that von Neumann's deficiency indices help. Von
Neumann's deficiency indices is a pair of numbers $(n,m)$ which serve as
obstructions for formally hermitian operators with dense domain in Hilbert
space to be selfadjoint.

For a single formally hermitian operator in Hilbert space $\mathcal{H}$, it
is known that selfadjoint extensions exist if and only if $n=m$. The size of 
$n$ ($=m$) is a measure of the cardinality of the set of selfadjoint
extensions: The different selfadjoint extensions are parameterized by
partial isometries between two subspaces in $\mathcal{H}$ each of dimension $%
n$.

The property of \emph{selfadjointness} is essential, as the spectral theorem
doesn't apply to formally hermitian operators. If the usual derivative
operator $d/dx$ is viewed as an operator in $L^{2}\left( 0,1\right) $ with
dense domain $\mathcal{D}$ consisting of differentiable functions vanishing
at the end-points, then one checks that it is formally skew-hermitian, and
that its deficiency indices are $(1,1)$. As a result, we see that the
derivative operator with zero-endpoint conditions has a one-parameter family
of selfadjoint extensions; the family being indexed by the circle. In fact,
the family has a simple geometric realization as follows: Thinking of
functions in $L^{2}\left( 0,1\right) $ as wave functions, there is then
clearly only one degree of freedom in choosing selfadjoint extensions. As we
translate the wave function to the right, and hit the boundary point, a free
phase must be assigned. This amounts to the `closing up' the wave functions
at the endpoints of the unit interval. What goes out at $x=1$ must return at 
$x=0$, possibly with a fixed phase correction. With this in mind, it was
natural to ask for an analogue of von Neumann's deficiency indices for
several operators. In fact this is a dream that was never realized, but one
which still serves to inspire me.

In the context of $\mathbb{R}^d$, i.e., higher dimensions, it is very
natural to ask the similar extension question for bounded open domains $\Omega$
in $\mathbb{R}^d$. And it was clear what to expect.

Going to $d$ dimensions: If functions in $L^{2}\left( \Omega\right) $ are
translated locally in the $d$ different coordinate directions, we will
expect that the issue of selfadjoint extension operators should be related
to the matching of phases on the boundary of $\Omega$, and therefore related to
the \emph{tiling} of $\mathbb{R}^{d}$ by translations of $\Omega$; i.e., with
translations of $\Omega$ which cover $\mathbb{R}^{d}$, and which do not overlap
on sets of positive Lebesgue measure. The global motion by continuous
translation in the $d$ coordinate directions will be determined uniquely by
the spectral theorem if we can find commuting selfadjoint extensions of the $%
d$ partial derivative operators 
\begin{equation*}
i\frac{\partial }{\partial x_{j}},\qquad j=1,\dots ,d,
\end{equation*}%
defined on the dense domain $\mathcal{D}$ of differentiable functions on $\Omega$
which vanish on the boundary. We can take $\mathcal{D}=C_{c}^{\infty }\left(
\Omega\right) $. These $d$ operators are commuting and formally hermitian, but
not selfadjoint. In fact when $d>1$, each of the operators has deficiency
indices $(\infty ,\infty )$. So in each of the $d$ coordinate directions, $%
i\,\partial /\partial x_{j}|_{\mathcal{D}}$ has an infinite variety of
selfadjoint extensions. But experimentation with examples shows that `most'
choices of $\Omega$ will yield non-commuting selfadjoint extensions. Each
operator individually does have selfadjoint extensions, and the question is
if they can be chosen to be mutually commuting. By this we mean that the
corresponding projection-valued spectral measures commute.

The spectral representation for every selfadjoint extension $H_{j}\supset
i\,\partial /\partial x_{j}|_{\mathcal{D}}$ has the form $H_{j}=\int_{%
\mathbb{R}}\lambda \,E_{j}\left( d\lambda \right) $, $j=1,\dots ,d$, where $%
E_{j}\colon \mathcal{B}\left( \mathbb{R}\right) 
\rightarrow \operatorname{Projections}\left( L^{2}\left( \Omega\right) \right) 
$ denotes the Borel subsets
of $\mathbb{R}$. We say that a family of $d$ selfadjoint extensions $%
H_{1},\dots ,H_{d}$ of the respective $i\,\partial /\partial x_{j}$
operators is commuting if $E_{j}\left( A_{j}\right) E_{k}\left( A_{k}\right)
=E_{k}\left( A_{k}\right) E_{j}\left( A_{j}\right) $ for all $A_{j},A_{k}\in 
\mathcal{B}\left( \mathbb{R}\right) $, and $j\neq k$.

When commuting extensions exist, we form the product measure%
\begin{equation*}
E=E_{1}\times \dots \times E_{d}
\end{equation*}%
on $\mathcal{B}\left( \mathbb{R}^{d}\right) $, and set%
\begin{equation*}
U\left( t\right) =\int_{\mathbb{R}^{d}}e^{i\lambda \cdot t}\,E\left(
d\lambda \right) ,
\end{equation*}%
where $t,\lambda \in \mathbb{R}^{d}$ and $\lambda \cdot
t=\sum_{j=1}^{d}\lambda _{j}t_{j}$. Then clearly 
\begin{equation*}
U\left( t\right) U\left( t^{\prime }\right) =U\left( t+t^{\prime }\right)
,\qquad t,t^{\prime }\in \mathbb{R}^{d},
\end{equation*}%
i.e., $U$ is a unitary representation of $\mathbb{R}^{d}$ acting on $%
\mathcal{H}=L^{2}\left( \Omega\right) $.

\begin{theorem}
\label{ThmFJP}\textup{(Fuglede--Jorgensen--Pedersen \cite{Fug74,Jor82,Ped87})%
} Let $\Omega$ be a non-empty connected open and bounded set in $\mathbb{R}^{d}$, and
suppose that the $d$ operators%
\begin{equation*}
i\frac{\partial }{\partial x_{j}}\bigg|_{C_{c}^{\infty }\left( \Omega\right)
},\qquad j=1,\dots ,d,
\end{equation*}%
have commuting extensions in $L^{2}\left( \Omega\right) $. Then there is a subset 
$S\subset \mathbb{R}^{d}$ such that%
\begin{equation*}
\left\{ \,e^{is\cdot x}\mid s\in S\,\right\} 
\end{equation*}%
is an orthogonal basis in $L^{2}\left( \Omega\right) $.
\end{theorem}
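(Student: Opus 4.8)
The plan is to turn the commuting self-adjoint extensions into a unitary representation $U$ of $\br^d$ on $L^2(\Omega)$, to recognize its joint spectral measure $E$ as built from the sought-after exponentials, and — the real work — to show that $E$ is purely atomic. By hypothesis there are self-adjoint $H_1,\dots,H_d$ on $\H=L^2(\Omega)$ extending $i\,\partial/\partial x_j|_{C_c^\infty(\Omega)}$ with mutually commuting spectral resolutions; the spectral theorem for a commuting family then gives a projection-valued measure $E$ on $\br^d$ with $H_j=\int\lambda_j\,E(d\lambda)$, and I put $U(t):=\int_{\br^d}e^{i\lambda\cdot t}\,E(d\lambda)$, a strongly continuous unitary representation of $\br^d$. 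This $U$ is precisely the ``translation with a boundary phase correction'' of the Appendix, and the theorem reduces to showing that $E$ is purely atomic with one-dimensional atoms.

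\textbf{Local translation, and the shape of the atoms.} First I would establish the \emph{local translation property}: if $g\in C_c^\infty(\Omega)$ and $v\in\br^d$ satisfy $\supp{g}+sv\subset\Omega$ for all $s\in[0,1]$, then $U(v)g=g(\cdot-v)$. Indeed $s\mapsto g(\cdot-sv)$ then takes values in $C_c^\infty(\Omega)\subset D\!\left(\sum_j v_j H_j\right)$, and there $i\!\left(\sum_j v_jH_j\right)g(\cdot-sv)=-(v\cdot\nabla)g(\cdot-sv)=\frac{d}{ds}g(\cdot-sv)$, so $s\mapsto g(\cdot-sv)$ and $s\mapsto U(sv)g$ both solve $\frac{d}{ds}w=i\!\left(\sum_j v_jH_j\right)w$ with $w(0)=g$; Stone's uniqueness forces them equal at $s=1$. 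Second, connectedness pins down the atoms: if $0\neq\xi\in E(\{s\})\H$ then $\xi\in D(H_j)$ and $H_j\xi=s_j\xi$, and since $H_j$ is a restriction of the maximal distributional operator $(i\,\partial/\partial x_j|_{C_c^\infty(\Omega)})^{*}=i\,\partial/\partial x_j$, we get $\partial_j\xi=-is_j\xi$ in $\mathcal D'(\Omega)$ for every $j$; hence $\partial_j(e^{is\cdot x}\xi)=0$, so $\xi$ is locally constant times $e^{-is\cdot x}$, hence, $\Omega$ being connected, $\xi\in\bc\cdot e^{-is\cdot x}|_\Omega$, and this vector lies in $L^2(\Omega)$ because $\Omega$ is bounded. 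Thus each atom is one-dimensional, the $s$'s are real, and eigenvectors attached to distinct atoms are orthogonal (distinct eigenvalues of a self-adjoint $H_j$); it remains only to show $E(\br^d\setminus S)=0$, where $S:=\{s:E(\{s\})\neq0\}$ is countable.

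\textbf{Atomicity, the main obstacle, and conclusion.} Here the boundedness of $\Omega$ must enter, and I expect this to be the hard part. The natural route is to produce a full-rank lattice $\Gamma\subset\br^d$ of \emph{scalar periods}, i.e.\ unimodular constants $c(\gamma)$ with $U(\gamma)=c(\gamma)\,I$ for all $\gamma\in\Gamma$ — heuristically, carrying a localized function across the bounded set $\Omega$ and back, the ``back'' being dictated by the matching built into the $H_j$, must return it up to a phase, and the $d$ commuting coordinate directions should supply $d$ independent periods; the technical heart is to make this rigorous for an arbitrary bounded connected $\Omega$, via the local-translation property above together with a covering/continuation argument over $\overline\Omega$. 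Granting it, $c\colon\Gamma\to\bt$ is a homomorphism; extend it to a character $\chi(t)=e^{i\xi\cdot t}$ of $\br^d$, so that $W(t):=\overline{\chi(t)}\,U(t)$ is trivial on $\Gamma$, hence a unitary representation of the \emph{compact} group $\br^d/\Gamma$. Such a representation decomposes as an orthogonal direct sum over the characters of $\br^d/\Gamma$, so $W$, and therefore $U$, has purely atomic joint spectral measure; that is exactly $E(\br^d\setminus S)=0$. Combining with the previous step, $\H=\bigoplus_{s\in S}E(\{s\})\H=\bigoplus_{s\in S}\bc\cdot e^{-is\cdot x}|_\Omega$, so, renaming $S$ as $-S$, the orthogonal family $\{e^{is\cdot x}\mid s\in S\}$ has dense span in $L^2(\Omega)$ — an orthogonal basis, as claimed.
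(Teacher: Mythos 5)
Your overall architecture (build $U(t)=\int e^{i\lambda\cdot t}E(d\lambda)$ from the commuting spectral resolutions, prove the local-translation property by Stone's uniqueness, use connectedness and $H_j\subset(i\,\partial/\partial x_j|_{C_c^\infty(\Omega)})^*$ to show every atom of $E$ is one-dimensional and spanned by an exponential, then read off orthogonality from $E(\{s\})E(\{s'\})=0$) is exactly the frame of the argument the paper sketches, and those steps are correct. But the entire content of the theorem sits in the step you label ``Atomicity'' and then concede with ``Granting it'': you never prove the existence of a full-rank lattice $\Gamma$ of scalar periods, and in fact no such lattice need exist, so the compact-quotient reduction cannot be the proof. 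Concretely: whenever $(\Omega,S)$ is a spectral pair, defining $H_j$ on the orthonormal basis $\{e_s\}_{s\in S}$ by $H_je_s=-2\pi s_je_s$ gives commuting selfadjoint extensions (integration by parts shows $C_c^\infty(\Omega)\subset D(H_j)$ with $H_jg=i\partial_jg$). Take $\Omega=(0,1)^2$, which is connected and bounded, and the non-lattice spectrum $S=\{(m,\,n+\theta_m)\mid m,n\in\bz\}$ with generic $\theta_m\in[0,1)$ (for each fixed $m$ the family $\{e^{2\pi i(n+\theta_m)y}\}_n$ is an ONB of $L^2(0,1)$, so this is a genuine spectrum). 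A scalar period $\gamma$ would force $(s-s')\cdot\gamma\in\bz$ for all $s,s'\in S$, and for suitably chosen irrational $\theta_m$ this pins the period group down to $\bz\times\{0\}$, which has rank $1<2$. So the unitary group attached to perfectly legitimate commuting extensions on a connected bounded set need not factor through any compact quotient $\br^d/\Gamma$; the spectrum need not lie in a translate of a lattice.

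This is not an accident of your write-up: the lattice conclusion you want is precisely what Fuglede obtained only under the additional ``multiplicativity'' hypothesis suggested by Segal (as recounted in the Appendix), and it fails without it. The actual proofs of atomicity in the cited sources go a different way: one uses the local-translation property together with the boundedness of $\Omega$ to show that for every bounded Borel set $A$ the range of $E(A)$ consists of restrictions to $\Omega$ of entire functions of exponential type with derivatives bounded by $C_A\|f\|$, so the unit ball of $E(A)L^2(\Omega)$ is precompact, hence the projection $E(A)$ is compact and therefore of finite rank; a projection-valued measure all of whose restrictions to bounded sets have finite rank is purely atomic, and then your remaining steps finish the proof. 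As it stands, your proposal establishes the easy bookkeeping around the theorem but leaves its essential assertion unproved, and the specific mechanism you propose for it is false in general.
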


\begin{remark}\label{remn}
It might be natural to expect that an open spectral set $\Omega$ will have its connected components spectral, or at least have features predicted by the spectrum of the bigger set. This is not so as the following example (due to Steen Pedersen) shows. Details below!

   If the set $\Omega$ in Theorem \ref{ThmFJP} is not assumed connected, the conclusion in Theorem \ref{ThmFJP} would be false. If a spectral set $\Omega$ is disconnected, then properties of the connected components are not immediately discerned from knowing that there is a spectrum for $\Omega$.

     An example showing this can be constructed by taking $\Omega = \Omega(p)$ to the following set obtained from a unit square, dividing it into two triangles along the main diagonal, followed by a translation of the triangle under the diagonal.

   Details: Start with the following two open triangles making up a fixed unit-square, divided along the main diagonal. Now make a translation of the triangle under the diagonal by a non-zero integer amount $p$ in the $x$-direction; i.e., by the vector $(p,0)$, leaving the upper triangle alone .

Further, let $\Omega(p)$ be the union of the resulting two $p$-separated triangles. Hence the two connected components in  $\Omega(p)$ will be two triangles; the second obtained from the first by a mirror image and a translation. Neither of these two disjoint open triangles is spectral; see \cite{Fug74}. Nonetheless, as spectrum for $\Omega(p)$ we may take the unit-lattice $\bz^2$. To see this, one may use a simple translation argument in  $L^2(\Omega(p))$, coupled with the fact that $e_\lambda(p) = 1$   for all $\lambda \in\bz^2$.

There are two interesting open connected sets in the plane that are known \cite{Fug74} not to be spectral. They are the open disk and the triangle. Of those two non-spectral sets, the triangles may serve as building blocks for spectral sets. Not the disks!

\end{remark}

\begin{remark}\label{rem4_2}
It is enough to assume that $\Omega\subset \mathbb{R}^{d}$ is open and has
positive finite Lebesgue measure.

If the set $\Omega$ is the theorem is disconnected, then it is not necessarily true that there is a single set $S$, such that $(\Omega,S)$ is a spectral pair.

To see this, take $d=1$ and $\Omega=(0,1)\cup(2,4)$, i.e., two open intervals as specified. The reason that there cannot be a set $S$ such that $\{e_s\,|\,s\in S\}$ is an ONB in $L^2(\Omega)$ is that the polynomial $z^4-z^2+z-1$ does not have enough zeros to make $(\Omega,S)$ a spectral pair.
\end{remark}

\begin{proof}
Fuglede proved, under more stringent assumptions, that the spectrum of $E$
is atomic, i.e., as a measure, $E$ is supported on a discrete subset $%
S\subset \mathbb{R}^{d}$. Moreover, each $E\left( \left\{ s\right\} \right)
L^{2}\left( \Omega\right) $ for $s\in S$ is one-dimensional, and $E\left( \left\{
s\right\} \right) L^{2}\left( \Omega\right) =\mathbb{C}\,e^{is\cdot x}$, where $%
e_{s}\left( x\right) =e^{is\cdot x}$ is the restriction to $\Omega$ of the
complex exponential function corresponding to vector frequency $s=\left(
s_{1},\dots ,s_{d}\right) \in S$. Since a spectral measure is \emph{%
orthogonal}, i.e., $E\left( A\right) E\left( A^{\prime }\right) =0$ if $%
A\cap A^{\prime }=\varnothing $, it follows that%
\begin{equation*}
\left\{ \,e_{s}\mid s\in S\,\right\} 
\end{equation*}%
is an orthogonal basis for $L^{2}\left( \Omega\right) $; in other words, we say
that $\left( \Omega,S\right) $ is a \emph{spectral pair}.
\end{proof}

By the reasoning from the $d=1$ example, by analogy, one would expect that
the existence of commuting selfadjoint extensions will force $\Omega$ to tile $%
\mathbb{R}^{d}$ by translations, at least if $\Omega$ is also connected. And in
any case, one would expect that issues of spectrum and tile for bounded sets 
$\Omega$ in $\mathbb{R}^{d}$ are related.

Segal suggested to his U. of Chicago Ph.D. student David Shale (originally
from New Zealand) that he should take a closer look at when an open bounded
set $\Omega$ in $\mathbb{R}^{d}$ has commuting selfadjoint extensions for the $d$
formally hermitian, and formally commuting partial derivatives. Apparently
the research group around Irving Segal was very active, and everyone talked
to one another. At the time, Bent Fuglede was a junior professor in
Copenhagen, and he was thinking about related matters. Moreover, he quickly
showed that if you add an assumption, then the results for problems of
selfadjoint extensions do follow closely the simple case of the unit
interval, i.e., $d=1$.

Actually Fuglede had initially complained to Segal that the problem was
hard, and Segal suggested to add an assumption, assuming in addition that
the local action by translations in $\Omega$ is multiplicative. With this,
Fuglede showed that for special configuration of sets $\Omega$, there are
selfadjoint extensions, and that they are associated in a natural way with
lattices $L$ in $\mathbb{R}^{d}$. The spectrum of the representation $U$ is
a lattice $L$. By a \emph{lattice} we mean a rank-$d$ discrete additive
subgroup of $\mathbb{R}^{d}$.

Suppose now that $d$ commuting selfadjoint extensions exist for some bounded
open domain $\Omega$ in $\mathbb{R}^{d}$; and suppose in addition the
multiplicative condition is satisfied. When the spectral theorem of
Stone--Naimark--Ambrose--Godement (the SNAG theorem) is applied to a
particular choice of $d$ associated commuting unitary one-parameter groups,
Fuglede showed that $\Omega$ must then be a fundamental domain (also called a
tile for translations) for the lattice $L^{\ast }$ which is dual to $L$. If $%
L$ is a lattice in $\mathbb{R}^{d}$, the \emph{dual lattice} $L^{\ast }$ is%
\begin{equation*}
L^{\ast }=\left\{ \,\lambda \in \mathbb{R}^{d}\mid \lambda \cdot s\in 2\pi 
\mathbb{Z}\text{ for all }s\in L\,\right\} .
\end{equation*}%
But more importantly, Fuglede pointed out that the several-variable variant
of the spectral theorem (the SNAG theorem), and some potential theory, shown
that if in addition $\Omega$ is assumed connected, and has a `regular' boundary,
then the existence of commuting selfadjoint extensions implies that $%
L^{2}\left( \Omega\right) $ has an orthonormal basis of complex exponentials 
\begin{equation*}
\{\,\exp (is\cdot x)\mid s\text{ in some discrete subset }S\subset \mathbb{R}%
^{d}\}.
\end{equation*}%
In a later paper \cite{Jor82}, Jorgensen suggested that a pair of sets $(\Omega,S)
$ be called a \emph{spectral pair}, and that $S$ called a spectrum of $\Omega$.
With this terminology, we can state Fuglede's conjecture as follows: A
measurable subset $\Omega$ of $\mathbb{R}^{d}$ with finite positive Lebesgue
measure has a spectrum, i.e., is the first part of a spectral pair, if and
only if $\Omega$ tiles $\mathbb{R}^{d}$ with some set of translation vectors in $%
\mathbb{R}^{d}$.

Apparently Fuglede's work was all done around 1954, and was in many ways
motivated by von Neumann's thinking about unbounded operators.

In the mean time, David Shale left the problem, and he ended up writing a
very influential Ph.D. thesis on representations of the canonical
anticommutation relations in the case of an infinite number of degrees of
freedom. See \cite{Sha62}.

Irving Segal explained to me in 1976 how his suggestions from the early
fifties had been motivated by von Neumann. Von Neumann had hoped that all
the work he and Marshal Stone had put into the axiomatic formulation of the
spectral representation theorem would pay off on the central questions on
the theory of PDEs. Initially, Fuglede wasn't satisfied with what he had
proved, and he didn't get around to publishing his paper until 1974, see 
\cite{Fug74}, and only after being prompted by Segal, who had in the mean
time become the founding editor of the \emph{Journal of Functional Analysis}.

Fuglede had apparently felt that we needed to first understand non-trivial
examples that arise from tilings by translations with vectors that can have
irregular configurations, and aren't related in any direct way to a lattice.
Natural examples for $\Omega$ for $d=2$ that come to mind are the open interior
of the triangle or of the disk. But Fuglede proved in \cite{Fug74} that
these two planar sets do \emph{not} have spectra, i.e., they do not have the
basis property for any subset $S\subset \mathbb{R}^{2}$. Specifically, in
either case, there is no $S\subset \mathbb{R}^{2}$ such that $\left\{
\,e_{s}|_{\Omega}\mid s\in S\,\right\} $ is an orthogonal basis for $L^{2}\left(
\Omega\right) $. This of course is consistent with the spectrum-tile conjecture.

Very importantly, in his 1974 paper, Fuglede calculated a number of
instructive examples that showed the significance of combinatorics and of
finite cyclic groups in our understanding of spectrum and tilings; and he
made precise what is now referred to as the Fuglede conjecture (\cite{JoPe94,JoPe00,PeWa01,Ped04, Jor82}).

 Fuglede's question about equivalence of the two properties (existence of orthogonal Fourier frequencies for a given measurable subset $\Omega$ in $\br^d$) and the existence of a subset which makes $\Omega$ tile $\br^d$ by translations, was for $d = 1, 2$, and perhaps $3$. But of course the question is intriguing for any value of the dimension $d$. In recent years, a number of researchers, starting with Tao, have now produced examples in higher dimensions giving negative answers: By increasing the dimension, it is possible to construct geometric obstructions to tiling which do not have spectral theoretic counterparts; and vice versa. For some of these examples we refer the reader to \cite{FM06, Tao04, LaWa95, LaWa96a, LaWa96b,  LaWa96c, IKT03, Mat05, KL96}.

\begin{acknowledgements}
In addition to those mentioned by name in the paper, one or both of the co-authors had helpful conversations with W. B. Arveson,  T. Branson, J. M. G. Fell, A. Iosevich, R. V. Kadison, K. Kornelson, J. Lagarias, W. Lawton, G. Mackey, P. Muhly, S. Pedersen, R.S. Phillips, K. Shuman, R. Strichartz, and Y. Wang.
The authors are grateful to Steen Pedersen for a correction in an earlier version of Theorem \ref{ThmFJP}.
\end{acknowledgements}

\bibliographystyle{alpha}
\bibliography{fspw}
\end{document}